\newtheorem{proposition}{Proposition}[section]
\newtheorem{theorem}[proposition]{Theorem}
\newtheorem{lemma}[proposition]{Lemma}
\newtheorem{definition}[proposition]{Definition}
\newtheorem{hypothesis}[proposition]{Hypothesis}
\theoremstyle{remark}
\newtheorem{remark}[proposition]{Remark}
\renewcommand{\d}{\mathrm{d}}
\newcommand{\wto}{\rightharpoonup}
\newcommand{\me}{\mathsf{e}}
\newcommand{\mv}{\mathsf{v}}
\newcommand{\E}{\mathbb{E}}
\renewcommand{\P}{\mathbb{P}}
\newcommand{\R}{\mathbb{R}}
\newcommand{\elle}{\mathbb{L}}
\renewcommand{\AA}{\mathcal{A}}
\newcommand{\FF}{\mathcal{F}}
\newcommand{\HH}{\mathcal{H}}
\newcommand{\KK}{\mathcal{K}}
\newcommand{\LL}{\mathcal{L}}
\newcommand{\TT}{\mathcal{T}}
\begin{document}

\title{Stochastic FitzHugh-Nagumo equations on networks\\
with impulsive noise}

\author{Stefano Bonaccorsi\footnote{Dipartimento di Matematica, Universit\`a di Trento, via Sommarive 14, 38100 Povo (TN), Italy. email: bonaccor@science.unitn.it} \and Carlo Marinelli\footnote{Institut
    f\"ur Angewandte Mathematik, Universit\"at Bonn, Wegelerstr. 6, D-53115 Bonn, Germany.} \and Giacomo
  Ziglio\footnote{Dipartimento di Matematica, Universit\`a di Trento, via Sommarive 14, 38100 Povo (TN), Italy. email: ziglio@science.unitn.it}}

\date{July 25, 2008}

\maketitle

\begin{abstract}
  We consider a system of nonlinear partial differential equations
  with stochastic dynamical boundary conditions that arises in models
  of neurophysiology for the diffusion of electrical potentials
  through a finite network of neurons. Motivated by the discussion in
  the biological literature, we impose a general diffusion equation on
  each edge through a generalized version of the FitzHugh-Nagumo
  model, while the noise acting on the boundary is described by a
  generalized stochastic Kirchhoff law on the nodes. In the abstract
  framework of matrix operators theory, we rewrite this stochastic
  boundary value problem as a stochastic evolution equation in
  infinite dimensions with a power-type nonlinearity, driven by an
  additive L\'evy noise. We prove global well-posedness in the mild
  sense for such stochastic partial differential equation by
  monotonicity methods.
\end{abstract}

\section{Introduction}
In this paper we study a system of nonlinear diffusion equations on a
finite network in the presence of an impulsive noise acting on the
nodes of the system.  We allow a rather general nonlinear drift term
of polynomial type, including functions of FitzHugh-Nagumo type
(i.e. $f(u) = -u(u-1)(u-a)$) arising in various models of
neurophysiology (see e.g.  the monograph \cite{KeeSne} for more
details).

Electric signaling by neurons has been studied since the 50s, starting
with the now classical Hodgkin-Huxley model \cite{HoHu} for the
diffusion of the transmembrane electrical potential in a neuronal
cell. This model consists of a system of four equations describing the
diffusion of the electrical potential and the behaviour of various ion
channels.  Successive simplifications of the model, trying to capture
the key phenomena of the Hodgkin-Huxley model, lead to the reduced
FitzHugh-Nagumo equation, which is a scalar equation with three stable
states (see e.g. \cite{Giugi}).

Among other papers dealing with the case of a whole neuronal network
(usually modeled as a graph with $m$ edges and $n$ nodes), which is
intended to be a simplified model for a large region of the brain, let
us mention a series of recent papers by Mugnolo et al.
\cite{KrMuSi,MuRo}, where the well-posedness of the isolated system is
studied.

Note that, for a diffusion on a network, other conditions must be
imposed in order to define the behaviour at the nodes. 
We impose a continuity condition, that is, given any node in the
network, the electrical potentials of all its incident edges are equal.
Each node represents an active soma, and in this part of the cell
the potential evolves following a generalized Kirchhoff condition that
we model with dynamical boundary conditions for the internal dynamics.

Since the classical work of Walsh \cite{Walsh}, stochastic partial
differential equations have been an important modeling tool in
neurophysiology, where a random forcing is introduced to model
external perturbations acting on the system. In our neuronal network,
we model the electrical activity of background neurons with a
stochastic input of impulsive type, to take into account the stream of
excitatory and inhibitory action potentials coming from the neighbors
of the network. The need to use models based on impulsive noise was
already pointed out in several papers by Kallianpur and coauthors --
see e.g.  \cite{KaWo,KaXi}. On the other hand, from a mathematical
point of view, the addition of a Brownian noise term does not affect
the difficulty of the problem. In fact, in section \ref{sec:set}
below, a Wiener noise could be added taking $q\neq0$, introducing
an extra term that does not modify the estimates obtained in section
$\ref{sec:3}$, which are the basis for the principal results of this
paper.  Let us also recall that the existence and uniqueness of
solutions to reaction-diffusion equations with additive Brownian noise
is well known -- see e.g. \cite{cerrai-libro,DP-K,DZ96}.

Following the approach of \cite{BoZi}, we use the abstract setting of
stochastic PDEs by semigroup techniques (see e.g. \cite{DP-K,DZ92}) to
prove existence and uniqueness of solutions to the system of
stochastic equations on a network.  In particular, the specific
stochastic dynamics is rewritten in terms of a stochastic evolution
equation driven by an additive L\'evy noise on a certain class of
Hilbert spaces. 

The rest of the paper is organized as follows: in section
\ref{sec:set} we introduce the problem and we motivate our assumptions
in connection with the applications to neuronal networks.  Then we
provide a suitable abstract setting and we prove, following
\cite{MuRo}, that the linear operator appearing as leading drift term
in the stochastic PDE generates an analytic semigroup of contractions.
Section \ref{sec:3} contains our main results.  First we prove
existence and uniqueness of mild solution for the problem under
Lipschitz conditions on the nonlinear drift term. This result (essentially
already known) is used to obtain existence and uniqueness in the mild
sense for the SPDE with a locally Lipschitz drift of FitzHugh-Nagumo
type by monotonicity techniques.

\section{Setting of the problem}
\label{sec:set}
Let us begin introducing some notation used throughout the paper. We
shall denote by $\wto$ and $\stackrel{\ast}{\wto}$, respectively, weak
and weak* convergence of functions. All stochastic elements are
defined on a (fixed) filtered probability space
$(\Omega,\mathcal{F},\mathcal{F}_t,\P)$ satisfying the usual
hypotheses.  Given a Banach space $E$, we shall denote by
$\mathbb{L}^p(E)$ the space of $E$-valued random variables with finite
$p$-th moment.

The network is identified with the underlying graph $G$,
described by a set of
$n$ vertices $\mv_1, \dots, \mv_n$ and $m$ oriented edges
$\me_1,\dots, \me_m$ which we assume to be normalized, i.e., $\me_j =
[0,1]$. The graph is described by the {\em incidence matrix} $\Phi =
\Phi^+ - \Phi^-$, where $\Phi^+ = (\phi_{ij}^+)_{n \times m}$ and
$\Phi^- = (\phi_{ij}^-)_{n \times m}$ are given by
\begin{equation*}
  \phi^-_{ij} = 
  \begin{cases}
    1, & \mv_i=\me_j(1) \\ 0, & \text{otherwise}
  \end{cases}
  \qquad
  \phi^+_{ij} = 
  \begin{cases}
    1, & \mv_i=\me_j(0) \\ 0, & \text{otherwise.}
  \end{cases}
\end{equation*}
The degree of a vertex is the number of edges entering or leaving the
node. We denote
\begin{equation*}
  \Gamma(\mv_i) = \{j \in \{1,\dots,m\}\ :\ \me_j(0) = \mv_i \text{ or
  } \me_j(1) = \mv_i\}
\end{equation*}
hence the degree of the vertex $\mv_i$ is the cardinality
$|\Gamma(\mv_i)|$. 

The electrical potential in the network shall be denoted by
$\bar{u}(t,x)$ where $\bar{u} \in (L^2(0,1))^m$ is the vector
$(u_1(t,x),\dots, u_m(t,x))$ and $u_j(t,\cdot)$ is the electrical
potential on the edge $\me_j$. We impose a general diffusion equation
on every edge
\begin{equation}
  \label{eq:E}
  \frac{\partial}{\partial t}u_j(t,x) = \frac{\partial}{\partial
    x}\left(c_j(x)\frac{\partial}{\partial x}u_j(t,x)\right) +
  f_j(u_j(t,x)),
\end{equation}
for all $(t,x)\in \R_+ \times (0,1)$ and all $j=1,...,m$.  The
generality of the above diffusion is motivated by the discussion in
the biological literature, see for example \cite{KeeSne}, who remark, in
discussing some concrete biological models, that the basic cable
properties is not constant throughout the dendritic tree.  The
above equation shall be endowed with suitable boundary and initial
conditions. Initial conditions are given for simplicity at time $t=0$
of the form
\begin{equation}
  \label{eq:I}
  u_j(0,x) =u_{j0}(x) \in C([0,1]),\qquad j=1,...,m.
\end{equation}
Since we are dealing with a diffusion in a
network, we require first a continuity assumption on every node
\begin{equation}
  \label{eq:C}
  p_i(t) := u_j(t,\mv_i)=u_k(t,\mv_i),\qquad
  t>0,\ j,k\in\Gamma(\mv_i),\ i=1,...,n 
\end{equation}
and a stochastic generalized Kirchhoff law in the nodes
\begin{equation}
  \label{eq:D}
  \frac{\partial}{\partial t} p_i(t) = -b_i p_i(t) + \sum_{j \in
    \Gamma(\mv_i)} \phi_{ij} \mu_j c_j(\mv_i) \frac{\partial}{\partial
    x}u_j(t,\mv_i) + \sigma_i \frac{\partial}{\partial t} L(t,\mv_i),
\end{equation}
for all $t>0$ and $i=1,\ldots,n$.
Observe that the plus sign in front of the Kirchhoff term in the above
condition is consistent with a model of purely excitatory node
conditions, i.e. a model of a neuronal tissue where all synapses
depolarize the postsynaptic cell. Postsynaptic potentials can have
graded amplitudes modeled by the constants $\mu_j > 0$ for all $j=1,...,m$.

Finally, $L(t,\mv_i), \ i=1,...,n$, represent the stochastic
perturbation acting on each node, due to the external surrounding, and
$\tfrac{\partial}{\partial t} L(t,\mv_i)$ is the formal time
derivative of the process $L$, which takes a meaning only in integral
sense. Biological motivations lead us to model this term by a
L\'evy process. In fact, the evolution of the electrical
potential on the molecular membrane can be perturbed by different
types of random terms, each modeling the influence, at different time
scale, of the surrounding medium. On a fast time scale, vesicles of
neurotransmitters released by external neurons cause electrical
impulses which arrive randomly at the soma causing a sudden change in
the membrane voltage potential of an amount, either positive or
negative, depending on the composition of the vesicle and possibly
even on the state of the neuron. We model this behaviour perturbing the
equation by an additive $n$-dimensional impulsive
noise of the form
\begin{equation}
  \label{eq:5}
  L(t)= \int_{\R^n} x \tilde{N}(t,{\rm d} x).
\end{equation}
See Hypothesis \ref{hp2} below for a complete description of the
process and \cite{KaXi} for a related model.

Although many of the above reasonings remain true also when considering
the diffusion process on the fibers, we shall not pursue such
generality and assume that the random perturbation acts only on the
boundary of the system, i.e. on the nodes of the network.

\medskip

Let us state the main assumptions on the data of the problem.
\begin{hypothesis}\label{hp1}
  \begin{enumerate}
  \item[]
  \item\label{hp1:1} In (\ref{eq:E}), we assume that $c_j(\cdot)$ belongs
    to $C^1([0,1])$, for $j=1,\dots, m$ and $c_j(x) > 0$ for every $x \in
    [0,1]$.
  \item\label{hp1:2} There exists constants $\eta \in \R$, $c > 0$
    and $s \ge 1$ such that, for $j=1,\dots,m$, the functions $f_j(u)$
    satisfy $f_j(u) + \eta u$ is continuous and decreasing, and
    $|f_j(u)| \le c (1+|u|^s)$.
  \item\label{hp1:3} In (\ref{eq:D}), we assume that $b_i \ge 0$ for every $i =
    1,\dots,n$ and at least one of the coefficients $b_i$ is strictly
    positive.
  \item $\{\mu_j\}_{j=1,...,m}$ and $\{\sigma_i\}_{i=1,\ldots,n}$ are
    real positive numbers.
  \end{enumerate}
\end{hypothesis}

Given a Hilbert space $\mathcal{H}$, let us define the space
$L^2_\mathcal{F}(\Omega\times[0,T];\HH)$ of adapted processes $Y:[0,T]
\to \HH$ endowed with the natural norm
\[
|Y|_2=\left(\E\int_0^T |Y(t)|^2_\HH\d t\right)^{1/2}.
\]
We shall consider a L\'evy process $\{L(t),\ t \ge 0\}$ with values in
$(\R^n,{\mathcal B}(\R^n))$, i.e., a stochastically continuous,
adapted process starting almost surely from 0, with stationary
independent increments and c\`adl\`ag trajectories. By the classical
L\'evy-It\^o decomposition theorem, one has
\begin{equation}\label{Levy-dec}
  L(t) = m t + q W_t + \int_{|x| \le 1} x [N(t,{\rm d}x) -t \nu({\rm d}
  x)]+\int_{|x| > 1} x N(t,{\rm d} x),\qquad t\geq 0
\end{equation}
where $m\in\R^n$, $q\in M_{n\times n}(\R)$ is a symmetric, positive
defined matrix, $\{W_t,\ t \ge 0\}$ is an $n$-dimensional centered
Brownian motion, $N(t,\d x)$ is a Poisson measure and the L\'evy measure
$\nu(\d x)$ is $\sigma$-finite on $\R^n\setminus\{0\}$ and such that
$\int\min(1,x^2)\nu(\d x)<\infty$.  We denote by $\tilde{N}(\d t,\d
x):=N(\d t,\d x) -\d t \nu(\d x)$ the compensated Poisson measure.

\begin{hypothesis}\label{hp2}
  We suppose that the measure $\nu$ has finite second order moment, i.e.
  \begin{equation}\label{cond1}
    \int_{\R^n} |x|^2 \nu({\rm d}x) < \infty.
  \end{equation}
\end{hypothesis}
Condition (\ref{cond1}) implies that the generalized compound
Poisson process $\int_{|x|>1} x \, N(t,{\rm d} x)$ has finite moments
of first and second order. Then, with no loss of generality, we assume
that
\begin{equation}
  \label{eq:cond1-bis}
  \int_{|x|>1} x \nu({\rm d}x) = 0.
\end{equation}
We also assume throughout that the L\'evy process is a pure jump
process, i.e. $m\equiv 0$ and $q \equiv 0$, which leads to the
representation \eqref{eq:5} in view of assumptions (\ref{cond1}) and \eqref{eq:cond1-bis}.

\subsection{Well-posedness of the linear deterministic problem}

We consider the product space ${\mathbb H}=(L^2(0,1))^m$. A vector
$\bar u \in {\mathbb H}$ is a collection of functions $\{u_j(x),\ x
\in [0,1],\ j=1,\dots, m\}$ which represents the electrical potential
inside the network.

\begin{remark}
  For any real number $s \ge 0$ we define the Sobolev spaces
  \begin{equation*}
    {\mathbb H}^s =(H^s(0,1))^m,
  \end{equation*}
  where $H^s(0,1)$ is the fractional Sobolev space defined for
  instance in \cite{LiMa}.  In particular we have that ${\mathbb H}^1
  \subset (C[0,1])^m$.  Hence we are allowed to define the boundary
  evaluation operator $\Pi: {\mathbb H}^1 \to \R^n$ defined by
  \begin{equation*}
    \Pi \bar u = 
    \begin{pmatrix}
      p_1 \\ \vdots \\ p_n
    \end{pmatrix},\quad\mbox{where }
    p_i = \bar u(\mv_i) = u_k(\mv_i) \quad \text{for $k \in \Gamma(\mv_i),\;i=1,...,n$.}
  \end{equation*}
\end{remark}

On the space ${\mathbb H}$ we introduce the linear operator $(A,D(A))$
defined by
\begin{equation*}
  \begin{aligned}
    &D(A) = \{\bar{u} \in {\mathbb H}^2 \mid \exists\, p \in \R^n
    \text{ such that } \Pi \bar u = p\}
    \\
    &A \bar{u} = \left( \phantom{\frac{\partial}{\partial}}\frac{\partial}{\partial x}
      \left(c_j(x)\frac{\partial}{\partial x}u_j(t,x)\right)
      \right)_{j=1, \dots, m}
  \end{aligned}
\end{equation*}

As discussed in \cite{MuRo}, the diffusion operator $A$ on a
network, endowed with active nodes, fits the abstract mathematical
theory of parabolic equations with dynamic boundary conditions, and in
particular it can be discussed in an efficient way by means of
sesquilinear forms.

Notice that no other condition except continuity on the nodes
is imposed on the elements of $D(A)$.  This is often stated by saying
that the domain is {\em maximal}.

The so called feedback operator, denoted by $C$, is a linear operator
from $D(A)$ to $\R^n$ defined as
\begin{equation*}
  C \bar{u} = \left( \sum_{j \in \Gamma(\mv_i)} \phi_{ij} \mu_j
    c_j(\mv_i) \frac{\partial}{\partial
      x}u_j(t,\mv_i)\right)_{i=1,\dots,n}.
\end{equation*}
On the vector space $\R^n$ we also define the diagonal matrix
\begin{equation*}
  B = 
  \begin{pmatrix}
    -b_1 \\ & \ddots \\ & & -b_n
  \end{pmatrix}.
\end{equation*}

\smallskip 

With the above notation, problem \eqref{eq:E}--\eqref{eq:D} can be
written as an abstract Cauchy problem on the product space $\HH = {\mathbb H} \times \R^n$ endowed with the natural inner product
\begin{equation*}
  \langle X, Y \rangle_{{\mathcal H}} = \langle \bar u, \bar v
  \rangle_{{\mathbb H}} + \langle p,q\rangle_{\R^n},
  \qquad\mbox{where }X,Y\in\HH\mbox{ and } X = \begin{pmatrix}
    \bar u \\ p \end{pmatrix},\ 
  Y = \begin{pmatrix}
    \bar v \\ q \end{pmatrix}.
\end{equation*}
We introduce the matrix operator ${\mathcal A}$ on the space $\HH$, given in
the form
\begin{equation*}
  {\mathcal A} =
  \begin{pmatrix}
    A & 0 \\ C & B
  \end{pmatrix}
\end{equation*}
with domain
\begin{equation*}
  D({\mathcal A}) = \{X = (\bar{u},p) \in\HH \,:\, \bar{u} \in D(A),
  u_j(\mv_i) = p_i \ \text{for every $j \in \Gamma(\mv_i)$}\}.
\end{equation*}
Then the linear deterministic part of problem
\eqref{eq:E}--\eqref{eq:D} becomes
\begin{equation}
  \label{eq:acp}
  \left\{
  \begin{aligned}
    \frac{{\rm d}}{{\rm d} t} X(t) &= {\mathcal A}X(t)  \\
    X(0)&= x_0
  \end{aligned}
  \right.
\end{equation}
where $x_0=(u_j(0,x))_{j=1,...,m}\in C([0,1])^m$ is the vector of initial conditions.
This problem is well posed, as the following result shows.

\smallskip
\begin{proposition}\label{generation}
  Under Hypotheses \ref{hp1}.\ref{hp1:1} and \ref{hp1}.\ref{hp1:2} 
  the operator $({\mathcal A},D({\mathcal A}))$ is self-adjoint,
  dissipative and has compact resolvent. In particular, it
  generates a $C_0$ analytic semigroup of contractions.
\end{proposition}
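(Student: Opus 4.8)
The plan is to realize $-\mathcal{A}$ as the operator associated with a symmetric, closed, coercive sesquilinear form, as suggested by the discussion around \cite{MuRo}: this single construction yields the self-adjointness and dissipativity of $\mathcal{A}$ together with analyticity and contractivity of the generated semigroup, and the compactness of the resolvent then follows from a Rellich-type argument. Concretely, on $\HH = \mathbb{H}\times\R^n$ — where I weight the $j$-th copy of $L^2(0,1)$ inside $\mathbb{H}$ by the constant $\mu_j$, obtaining a norm equivalent to the natural one — I take as form domain
\[
\mathcal{V} := \bigl\{ X = (\bar u,p) \in \mathbb{H}^1\times\R^n \ :\ \Pi\bar u = p \bigr\},
\]
which is well defined since $\mathbb{H}^1\subset (C[0,1])^m$ and is a closed subspace of $\mathbb{H}^1\times\R^n$ (the kernel of the continuous map $X\mapsto \Pi\bar u - p$), hence a Hilbert space; on $\mathcal{V}$ I set
\[
a(X,Y) := \sum_{j=1}^m \mu_j\int_0^1 c_j(x)\,u_j'(x)\,v_j'(x)\,\d x \ +\ \sum_{i=1}^n b_i\,p_i\,q_i, \qquad X=(\bar u,p),\ Y=(\bar v,q).
\]

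First I would verify the structural hypotheses of the form method. Density of $\mathcal{V}$ in $\HH$ is obtained by gluing an arbitrary $(\bar u_0,p_0)\in\HH$ to the constants $(p_0)_i$ on shrinking neighbourhoods of the nodes via smooth cut-offs. Continuity of $a$ on $\mathcal{V}$ is clear because $c_j\in C^1([0,1])\subset L^\infty(0,1)$ and the $b_i$ are finite; symmetry is evident. For coercivity I invoke Hypothesis \ref{hp1}: the positivity of $c_j$ gives $\delta:=\min_j\min_{[0,1]} c_j>0$ and the $b_i\ge 0$, so $a(X,X)\ge \delta\sum_j\mu_j\|u_j'\|_{L^2}^2\ge 0$ and hence $a(X,X)+\|X\|_\HH^2\ge \min(\delta,1)\|X\|_\mathcal{V}^2$. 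Thus $a$ is a symmetric, continuous, nonnegative, $\HH$-elliptic form on the Hilbert space $\mathcal{V}$, densely and continuously embedded in $\HH$, and therefore closed. The standard correspondence between such forms and self-adjoint operators (see e.g. \cite{MuRo} and the references therein) then produces a nonnegative self-adjoint $T$ with $\langle TX,Y\rangle_\HH = a(X,Y)$ for $X\in D(T)$, $Y\in\mathcal{V}$; consequently $-T$ is self-adjoint, dissipative (as $\langle -TX,X\rangle = -a(X,X)\le 0$), and generates an analytic $C_0$-semigroup of contractions (since $\sigma(T)\subset[0,\infty)$). Finally $-T$ has compact resolvent because $\mathcal{V}\hookrightarrow\HH$ is compact: a $\mathcal{V}$-bounded sequence has its $\mathbb{H}^1$-part precompact in $\mathbb{H}$ by Rellich's theorem and its $\R^n$-part precompact by finite-dimensionality.

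It remains to identify $-T$ with $(\mathcal{A},D(\mathcal{A}))$, and this is the step I expect to require the most care. Testing $a(X,Y)=\langle TX,Y\rangle_\HH$ against $Y=(\bar v,0)$ with $\bar v\in C_c^\infty(0,1)^m$ shows, by one-dimensional elliptic regularity on each edge (using $c_j\in C^1$, $c_j>0$), that $\bar u\in\mathbb{H}^2$ and that the $\mathbb{H}$-component of $TX$ is $-\bigl(\partial_x(c_j\partial_x u_j)\bigr)_j = -A\bar u$; combined with $X\in\mathcal{V}$ this gives $(\bar u,p)\in D(\mathcal{A})$. Testing then against a general $Y=(\bar v,q)\in\mathcal{V}$ and integrating by parts on each edge, the boundary contributions $\sum_j\mu_j\bigl[c_j u_j' v_j\bigr]_0^1$ reorganize, by means of the incidence relations ($\me_j(1)=\mv_i\Rightarrow\phi_{ij}=-1$, $\me_j(0)=\mv_i\Rightarrow\phi_{ij}=+1$) and the continuity constraint $v_j(\mv_i)=q_i$, into $-\sum_i q_i\sum_{j\in\Gamma(\mv_i)}\phi_{ij}\mu_j c_j(\mv_i)\partial_x u_j(\mv_i) = -\langle C\bar u,q\rangle_{\R^n}$, so that the $\R^n$-component of $TX$ equals $-(C\bar u+Bp)$. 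Hence $TX=-\mathcal{A}X$ on $D(\mathcal{A})$, and $D(\mathcal{A})\subseteq D(T)$ follows by reading the same Green's formula in reverse; this proves $-T=\mathcal{A}$, and with it the proposition. The only genuinely non-routine points are the density of $\mathcal{V}$ in $\HH$ and, above all, tracking the weights $\mu_j$ together with the sign conventions of the incidence matrix so that they come out consistently in this last identification — which is exactly where the generalized Kirchhoff law \eqref{eq:D} gets encoded in the operator.
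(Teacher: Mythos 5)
Your proposal is correct and follows essentially the same route as the paper: both associate $-\mathcal{A}$ with the symmetric form $a(X,Y)=\sum_j\mu_j\int_0^1 c_j u_j'v_j'\,\d x+\sum_i b_ip_iq_i$ on the node-continuity space $\mathcal{V}$ and invoke the standard form--operator correspondence, the paper merely leaving the density, coercivity, compact-embedding and identification steps as a cited ``little computation.'' Your one refinement --- weighting the $j$-th copy of $L^2(0,1)$ by $\mu_j$ so that the associated operator is exactly $-\mathcal{A}$ rather than $-(\mu_j\,\partial_x(c_j\partial_x\cdot))_j$ --- is a point the paper glosses over and is handled correctly.
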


\smallskip
\begin{proof}
  For the sake of completeness, we provide a sketch of the proof
  following \cite{MuRo}. The idea is simply to associate the
  operator $({\mathcal A},D({\mathcal A}))$ with a suitable form
  ${\mathfrak a}(X,Y)$ having dense domain ${\mathcal V} \subset
  {\mathcal H}$.

  The space ${\mathcal V}$ is defined as
  \begin{equation*}
    {\mathcal V} = \left\{ X = 
      \begin{pmatrix}
        \bar{u} \\ p
      \end{pmatrix} \mid \bar{u} \in (H^1(0,1))^m,\ u_k(\mv_i) = p_i\ \text{for $i=1,\dots,n$, $k \in \Gamma(\mv_i)$}\right\}
  \end{equation*}
  and the form ${\mathfrak a}$ is defined as
  \begin{equation*}
    {\mathfrak a}(X,Y) = \sum_{j=1}^m \int_0^1 \mu_j c_j(x) u'_j(x)
    v'_j(x) \, {\rm d}x + \sum_{l=1}^n b_l p_l q_l, \qquad X =
    \begin{pmatrix} \bar u \\ p
  \end{pmatrix},\ Y = 
  \begin{pmatrix}
    \bar v \\ q
  \end{pmatrix}.
  \end{equation*}
  The form ${\mathfrak a}$ is clearly positive and symmetric;
  furthermore it is closed and continuous. Then a little computation
  shows that the operator associated with ${\mathfrak a}$ is
  $({\mathcal A},D({\mathcal A}))$ defined above. Classical
  results in Dirichlet forms theory, see for instance \cite{ouhabaz},
  lead to the desired result.
\end{proof}

The assumption that $b_l > 0$ for some $l$ is a dissipativity
condition on ${\mathcal A}$. In particular it implies the following
result (for a proof see \cite{MuRo}).
\smallskip
\begin{proposition}
  Under Hypotheses \ref{hp1}.\ref{hp1:1} and \ref{hp1}.\ref{hp1:3},
  the operator ${\mathcal A}$ is invertible and the semigroup
  $\{{\mathcal T}(t),\ t \ge 0\}$ generated by ${\mathcal A}$ is
  exponentially bounded, with growth bound given by the strictly
  negative spectral bound of the operator ${\mathcal A}$.
\end{proposition}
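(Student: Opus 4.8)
The plan is to use everything established in Proposition \ref{generation}: since $\mathcal{A}$ is self-adjoint, dissipative and has compact resolvent, its spectrum $\sigma(\mathcal{A})$ consists only of a sequence of real eigenvalues of finite multiplicity with no finite accumulation point, all contained in $(-\infty,0]$; in particular the spectral bound $s(\mathcal{A})$ equals the largest eigenvalue of $\mathcal{A}$, and invertibility of $\mathcal{A}$ is equivalent to $0$ not being an eigenvalue. So the whole statement reduces to showing that $\ker\mathcal{A}=\{0\}$, which gives both $0\in\rho(\mathcal{A})$ and the strict inequality $s(\mathcal{A})<0$.

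To prove $\ker\mathcal{A}=\{0\}$ I would argue through the form $\mathfrak{a}$ used in the proof of Proposition \ref{generation}. Since $-\mathcal{A}$ is the operator associated with the positive symmetric form $\mathfrak{a}$, one has $-\langle\mathcal{A}X,X\rangle_{\HH}=\mathfrak{a}(X,X)$ for every $X=(\bar u,p)\in D(\mathcal{A})\subset\mathcal{V}$. Hence if $\mathcal{A}X=0$ then
\[
0=\mathfrak{a}(X,X)=\sum_{j=1}^m\int_0^1 \mu_j c_j(x)\,|u_j'(x)|^2\,\d x+\sum_{l=1}^n b_l\,p_l^2 .
\]
By Hypothesis \ref{hp1}.\ref{hp1:1} we have $\mu_j>0$ and $c_j(x)>0$ on $[0,1]$, so the first sum forces $u_j'\equiv 0$, i.e.\ each $u_j$ is constant on its edge; the continuity conditions built into $D(\mathcal{A})$ together with the connectedness of the network then force all these constants to agree and to equal the common nodal value, say $p_l=\kappa$ for all $l$. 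The second sum becomes $\kappa^2\sum_l b_l=0$, and since $b_l\ge 0$ with at least one $b_l>0$ by Hypothesis \ref{hp1}.\ref{hp1:3}, we get $\kappa=0$ and thus $X=0$. Because the resolvent is compact, $\sigma(\mathcal{A})$ is purely discrete, so $0\notin\sigma(\mathcal{A})$ and $\mathcal{A}$ is invertible with $s(\mathcal{A})<0$.

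For the last point, that the growth bound of $\mathcal{T}$ coincides with $s(\mathcal{A})$, I would invoke the spectral theorem for the self-adjoint operator $\mathcal{A}$ (bounded above by $s(\mathcal{A})$), which yields $\|\mathcal{T}(t)\|=e^{t\,s(\mathcal{A})}$ for all $t\ge0$; equivalently, one may note that generators of analytic semigroups satisfy the spectral mapping theorem, so $\omega_0(\mathcal{T})=s(\mathcal{A})$. Either way, combining with $s(\mathcal{A})<0$ gives that $\mathcal{T}$ is exponentially bounded with rate the strictly negative spectral bound. The only genuinely delicate step is the kernel computation: it relies on the structural fact that the graph $G$ is connected (so that "constant on each edge and continuous at the nodes" means "globally constant") and on the presence of at least one strictly positive damping coefficient $b_i$; the rest is a soft consequence of self-adjointness, compactness of the resolvent, and analyticity of the semigroup.
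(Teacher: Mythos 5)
Your proof is correct, and it is worth noting that the paper itself offers no argument for this proposition at all: it simply defers to \cite{MuRo}. What you supply is the natural self-contained argument, and it is essentially the one that reference carries out: everything except invertibility is already contained in Proposition \ref{generation} (self-adjointness plus compact resolvent give a discrete real spectrum in $(-\infty,0]$, and for a self-adjoint generator --- or, alternatively, for any analytic semigroup --- the growth bound coincides with the spectral bound), so the whole content is the kernel computation $\mathfrak{a}(X,X)=0\Rightarrow X=0$, which you carry out correctly using $\mu_jc_j>0$ to kill the gradients and $\sum_l b_l>0$ to kill the resulting constant. Two small remarks. First, you are right to flag connectedness of $G$ as the genuinely load-bearing structural assumption: the paper never states it explicitly, and without it Hypothesis \ref{hp1}.\ref{hp1:3} would have to be strengthened to ``at least one $b_i>0$ per connected component,'' since otherwise a nonzero constant supported on an undamped component lies in $\ker\mathcal{A}$; making this hidden hypothesis visible is a genuine improvement over the paper's bare citation. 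Second, a cosmetic point: the statement of Proposition \ref{generation} is given under Hypotheses \ref{hp1}.\ref{hp1:1} and \ref{hp1}.\ref{hp1:2}, while the present proposition assumes \ref{hp1}.\ref{hp1:1} and \ref{hp1}.\ref{hp1:3}; since Hypothesis \ref{hp1}.\ref{hp1:2} concerns only the nonlinearities $f_j$ and plays no role in the linear operator, your appeal to the conclusions of Proposition \ref{generation} is legitimate, but strictly speaking you are relying on the (evident) fact that its proof uses only \ref{hp1}.\ref{hp1:1} together with $b_i\ge0$.
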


\section{The stochastic Cauchy problem}
\label{sec:3}
We can now solve the system of stochastic differential equations
(\ref{eq:E})--~(\ref{eq:D}).  The functions $f_j(u)$ which appear in
(\ref{eq:E}) are assumed to have a polynomial growth.  We remark that
the classical FitzHugh-Nagumo problem requires
$$
f_j(u)= u(u-1)(a_j-u)\qquad j=1,...,m
$$
for some $a_j \in (0,1)$, and satisfies Hypothesis
\ref{hp1}.\ref{hp1:2} with
\[
\eta \leq -\max_j \frac{(a_j^3+1)}{3(a_j+1)}, \qquad s=3.
\]
We set
\begin{equation}\label{eq:F_fitz-nag}
  F(\bar u) = 
  \begin{pmatrix}
    f_j(u_j)
  \end{pmatrix}_{j=1,\dots,m} \quad \text{ and }\quad
\mathcal{F}(X)=\begin{pmatrix}-F({\bar u})\\0\end{pmatrix}\quad\text{ for }X=\begin{pmatrix}{\bar u}\\p\end{pmatrix},
\end{equation}
and we write our problem in abstract form
\begin{equation}\label{eq:NL}
\left\{
  \begin{aligned}
    {\rm d} X(t) &= [\AA X(t) - \mathcal{F}(X(t))] \, {\rm d}t+ \Sigma \, {\rm
      d}\LL(t)
    \\
    X(0)&= x_0,
  \end{aligned}
  \right.
\end{equation}
where $\Sigma$ is the matrix defined by
\[
\Sigma = \begin{pmatrix}
0 & 0 \\ 0 & \sigma
\end{pmatrix}
=
\begin{pmatrix}
0 & 0 \\ 0 & {\rm diag}(\sigma_1,\dots,\sigma_n)
\end{pmatrix},
\]
and $\LL(t)$ is the natural embedding in $\HH$
of the $n$-dimensional L\'evy process $L(t)$, i.e.
\begin{equation*}
  \LL(t) = 
  \begin{pmatrix}
    0 \\ L(t)
  \end{pmatrix}.
\end{equation*}
\begin{remark}
  Note that in general $\FF$ is only defined on its domain $D(\FF)$, which is
  strictly smaller than $\HH$.
\end{remark}
Let us recall the definition of mild solution for the stochastic
Cauchy problem \eqref{eq:NL}.

\begin{definition}
  \label{de:mild-solution}
  An $\HH$-valued predictable process $X(t)$, $t\in[0,T]$, is said to
  be a \emph{mild solution} of (\ref{eq:NL}) if
  \begin{equation}\label{cond:mild}
  \int_0^T|\mathcal{F}(X(s))|\,\d s<+\infty
  \end{equation}
  and
  \begin{equation}\label{eq:mild}
    X(t) = \TT(t)x_0 - \int_0^t \TT(t-s)\mathcal{F}(X(s)) \,\d s +
    \int_0^t\TT(t-s)\Sigma\, \d\LL(s)
  \end{equation}
  $\mathbb{P}$-a.s. for all $t\in[0,T]$, where $\TT(t)$ is the semigroup
  generated by $\AA$.
\end{definition}
Condition (\ref{cond:mild}) implies that the first integral in
(\ref{eq:mild}) is well defined. The second integral, which we shall
refer to as stochastic convolution, is well defined as will be shown
in the following subsection.

\subsection{The stochastic convolution process}
\label{se:levy}
In our case the stochastic convolution can be written as
\begin{equation*}
  Z(t) = \int_0^t \int_{\R^n} \TT(t-s) \begin{pmatrix}0 \\ \sigma
    x\end{pmatrix} \, \tilde{N}({\rm d}s,{\rm d} x).
\end{equation*}
The definition of stochastic integral with respect to a compensated
Poisson measure has been discussed by many authors, see for instance
\cite{AlbRud-LI,App2,App1,choj,GS-III,Hau}. Here we limit ourselves to
briefly recalling some conditions for the existence of such integrals. In
particular, in this paper we only integrate deterministic functions,
such as $\TT(\cdot)\Sigma$, taking values in (a subspace of) $L(\HH)$,
the space of linear operators from $\HH$ to $\HH$. In order to define
the stochastic integral of this class of processes with respect to the
L\'evy martingale-valued measure
\begin{equation}\label{martingale-measure} 
   M(t,B)=\int_B x\, \tilde{N}(t,{\rm d} x),
\end{equation}
one requires that the mapping
$\TT(\cdot)\Sigma:[0,T]\times\R^n\ni(t,x)\mapsto\TT(t)(0,\sigma x)$
belongs to the space $L^2((0,T)\times B;\langle M({\rm d} t,{\rm d}
x)\rangle)$ for every $B\in\mathcal{B}(\R^n)$, i.e. that
\begin{equation}\label{strongintegral}
    \int_0^T \int_B \left|\TT(s)
        \begin{pmatrix}0\\\sigma x\end{pmatrix}
      \right|_\HH^2 \, \nu({\rm d} x)\, {\rm d} s< \infty.
  \end{equation}
Thanks to (\ref{cond1}), one has
\begin{align*}
    &\int_0^T \int_B \left |\TT(s)
      \begin{pmatrix}0\\\sigma x\end{pmatrix} \right|_\HH^2 \,
    \nu({\rm d} x)\, {\rm d} s\\
    &\qquad \leq |\sigma|^2 \left(\int_0^T |\TT(s)|_{L(\HH)}^2 \, {\rm d}
      s\right) \left(\int_B |x|^2 \, \nu({\rm d} x)\right) < \infty,
\end{align*}
thus the stochastic convolution $Z(t)$ is well defined for all $t\in[0,T]$.

We shall now prove a regularity property (in space) of the stochastic
convolution. Below we will also
see that the stochastic convolution has c\`adl\`ag paths.

Let us define the product spaces $\mathcal{E}:=(C[0,1])^m\times\R^n$
and $C_\mathcal{F}([0,T];L^2(\Omega;\mathcal{E}))$, the space of
$\mathcal{E}$-valued, adapted mean square continuous processes $Y$ on
the time interval $[0,T]$ such that
\[
|Y|^2_{C_\mathcal{F}} := \sup_{t\in[0,T]}\E|Y(t)|_\mathcal{E}^2 
< \infty.
\]
\begin{lemma}\label{lemma1}
  For all $t \in[0,T]$, the stochastic convolution $\{Z(t),\ t \in
  [0,T]\}$ belongs to the space $C_\mathcal{F}([0,T];L^2(\Omega;{\mathcal
    E}))$. In particular, $Z(t)$ is predictable.
\end{lemma}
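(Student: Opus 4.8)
The plan is to show that $Z(t)$ takes values in $\mathcal{E}=(C[0,1])^m\times\R^n$, that it is mean-square continuous in time, and that it is predictable (indeed adapted, which combined with mean-square continuity yields predictability). Since $Z$ is the stochastic convolution of the $\R^n$-valued (finitely many jumps, or rather a martingale-valued measure with finite second moment) L\'evy noise against the analytic contraction semigroup $\TT(\cdot)$ generated by $\mathcal{A}$, the natural route is a factorization argument combined with the smoothing properties of $\TT$. The essential point is that, because $\mathcal{A}$ is self-adjoint with compact resolvent and generates an analytic semigroup, its fractional powers are well behaved and $\TT(t)$ maps $\HH$ into $D((-\mathcal{A})^\alpha)$ for every $\alpha\ge0$ with the standard bound $|(-\mathcal{A})^\alpha\TT(t)|_{L(\HH)}\lesssim t^{-\alpha}$ for $t\in(0,T]$; and for $\alpha$ large enough the space $D((-\mathcal{A})^\alpha)$ embeds continuously into $\mathcal{E}$. (This last embedding follows from $\mathbb{H}^1\subset(C[0,1])^m$ together with interpolation: $D((-\mathcal{A})^\alpha)$ is, up to the coupling/boundary conditions, a product Sobolev space $\mathbb{H}^{2\alpha}\times\R^n$, which embeds into $(C[0,1])^m\times\R^n$ as soon as $2\alpha>1/2$.)

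First I would fix $\alpha\in(1/4,1/2)$ so that $D((-\mathcal{A})^\alpha)\hookrightarrow\mathcal{E}$ and simultaneously $\alpha<1/2$ so that the relevant time integrals converge. Then I would invoke the stochastic Fubini / factorization formula: writing
\[
Z(t)=\frac{\sin\pi\alpha}{\pi}\int_0^t(t-s)^{\alpha-1}\TT(t-s)Y_\alpha(s)\,\d s,
\qquad
Y_\alpha(s)=\int_0^s(s-r)^{-\alpha}\TT(s-r)\Sigma\,\d\LL(r),
\]
one reduces the problem to two tasks: (i) showing $Y_\alpha\in L^p_\FF(\Omega\times[0,T];\HH)$ for a suitable $p$, and (ii) showing that the deterministic convolution operator $R_\alpha g(t)=\int_0^t(t-s)^{\alpha-1}\TT(t-s)g(s)\,\d s$ maps that $L^p$-space into $C([0,T];\mathcal{E})$. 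Step (i) is a direct It\^o-isometry-type estimate for stochastic integrals against the martingale-valued measure $M(\d t,\d x)$ (of the kind already carried out right before the lemma, leading to \eqref{strongintegral}), using $|(s-r)^{-\alpha}\TT(s-r)|_{L(\HH)}\le C(s-r)^{-\alpha}$ and $\int_0^s(s-r)^{-2\alpha}\,\d r<\infty$ since $2\alpha<1$; together with Hypothesis \ref{hp2} this gives a finite bound on $\E|Y_\alpha(s)|_\HH^2$ uniformly in $s$. Step (ii) is the classical factorization lemma of Da Prato--Kwapie\'n--Zabczyk: one checks $\int_0^t(t-s)^{(\alpha-1)q'}|(-\mathcal{A})^\alpha\TT(t-s)|^{q'}_{L(\HH)}\,\d s<\infty$ for the conjugate exponent, which needs $(\alpha-1+(-\alpha))q'>-1$, i.e. $q'<1/(1-2\alpha)\cdot$ — more precisely one must balance the two singular factors $(t-s)^{\alpha-1}$ and $(t-s)^{-\alpha}$ (from $(-\mathcal{A})^\alpha\TT$), whose product is $(t-s)^{-1}$ and is \emph{not} integrable, so one actually needs a slightly larger regularity exponent $\beta<\alpha$ for the embedding and uses $(-\mathcal{A})^\beta$; choosing $1/4<\beta<\alpha<1/2$ makes $(\alpha-1)q'-\beta q'>-1$ solvable for some $q'>1$, hence $R_\alpha$ maps $L^q([0,T];\HH)$ continuously into $C([0,T];D((-\mathcal{A})^\beta))\hookrightarrow C([0,T];\mathcal{E})$.

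For the mean-square continuity statement, once $Z\in C([0,T];\mathcal{E})$ $\P$-a.s.\ is known (from step (ii), since $R_\alpha g$ is genuinely continuous for $g\in L^q$), one upgrades to $C_\FF([0,T];L^2(\Omega;\mathcal{E}))$ by a uniform $L^2$ bound $\sup_{t\le T}\E|Z(t)|_\mathcal{E}^2<\infty$ — which drops out of the same estimates, via $\E|Z(t)|_\mathcal{E}^2\le C\,\E\big(\int_0^t(t-s)^{\beta-1-\alpha}|Y_\alpha(s)|_\HH\,\d s\big)^2\le C_T\sup_s\E|Y_\alpha(s)|_\HH^2<\infty$ by H\"older — together with dominated convergence to pass to the limit $t'\to t$ inside the expectation. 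Adaptedness of $Z(t)$ is immediate because the integrand $\TT(t-s)\Sigma$ is deterministic and the integral is over $[0,t]$, so $Z(t)$ is $\FF_t$-measurable; combined with (mean-square, hence stochastic) continuity this gives predictability.

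The main obstacle is the non-integrability of the product of the two singular kernels in the factorization, i.e.\ the fact that the "borderline" exponent $\alpha=1/2$ needed to reach $\mathcal{E}$ via $\mathbb{H}^1\hookrightarrow(C[0,1])^m$ is exactly the one for which the time integrals diverge. The resolution — and the step that needs genuine care — is to exploit that the embedding $D((-\mathcal{A})^\beta)\hookrightarrow\mathcal{E}$ in fact holds for some $\beta<1/2$ strictly (because the Sobolev embedding $H^{2\beta}(0,1)\hookrightarrow C[0,1]$ already holds for $2\beta>1/2$), leaving a margin $\beta\in(1/4,1/2)$ within which both the stochastic-integrability of $Y_\alpha$ and the continuity-mapping property of $R_\alpha$ can be simultaneously satisfied; one also must confirm that $D((-\mathcal{A})^\beta)$ really does sit inside the product Sobolev space with the right index (here the self-adjointness and the explicit form $\mathfrak{a}$ of the associated form from Proposition \ref{generation} are what is used, since they identify the form domain $\mathcal{V}$ with $\approx\mathbb{H}^1\times\R^n$ and hence pin down the interpolation scale).
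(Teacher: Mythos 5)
Your overall strategy---identify a fractional-power/interpolation space that embeds into $\mathcal{E}$ and exploit the analytic smoothing of $\TT(t)$---is the right one, and your bookkeeping of the Sobolev indices ($\beta>1/4$ for the embedding $\HH_\beta\hookrightarrow\mathcal{E}$, the role of the form domain in pinning down the interpolation scale) matches the paper. But the factorization route you build on top of it does not close under Hypothesis \ref{hp2}, and the paper explicitly warns (just after Theorem \ref{th:caso-lip}) that the factorization technique cannot be adapted to Poisson integrals. Concretely: your step (ii) requires $Y_\alpha\in L^q([0,T];\HH)$ with $(\alpha-1-\beta)q'>-1$ for the conjugate exponent $q'$; since you must take $\alpha<1/2$ (for $Y_\alpha$ to be defined) and $\beta>1/4$ (for the embedding into $\mathcal{E}$), you have $1+\beta-\alpha>3/4$, hence $q'<4/3$ and $q>4$. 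Your step (i), however, is ``a direct It\^o-isometry-type estimate'', which only controls second moments; under Hypothesis \ref{hp2} the L\'evy measure is only assumed to have a finite second moment, so there is no analogue of the Gaussian moment equivalence that, in the Wiener case, upgrades the isometry bound to $\E|Y_\alpha(s)|^q$ for $q>4$. The gap between the $q=2$ you can prove and the $q>4$ you need is exactly the obstruction; your proposed resolution (choosing $1/4<\beta<\alpha<1/2$ so that \emph{some} $q'>1$ works) does not remove it, because the corresponding $q$ still exceeds $4$. A second, independent red flag: your step (ii) would deliver $Z\in C([0,T];\mathcal{E})$ $\P$-a.s., which is false for jump noise---at each jump time $\tau$ of $L$ the convolution jumps by $\Sigma\,\Delta\LL(\tau)$, so the paths are at best c\`adl\`ag; an argument yielding pathwise continuity must therefore contain an error.

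The paper's proof avoids the double singularity altogether by applying the smoothing \emph{inside} the isometry rather than through a factorization: since $(0,\sigma x)\in\HH_\theta$ for $\theta<1/4$ (below the threshold at which boundary conditions enter the interpolation space) and $|\TT(s)y|_{\theta+\gamma}\le s^{-\gamma}e^{\omega_\AA s}|y|_\theta$, the Poisson isometry gives
\[
\E|Z(t)|_{\theta+\gamma}^2\;\le\;|\sigma|^2\int_0^t s^{-2\gamma}e^{2\omega_\AA s}\,\d s\,\int_{\R^n}|x|^2\,\nu(\d x)<\infty
\]
for any $\gamma<1/2$, and one can choose $\theta+\gamma>1/2$ so that $\HH_{\theta+\gamma}\hookrightarrow\mathcal{E}$. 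Only the single singular factor $s^{-2\gamma}$ appears, and only second moments of $\nu$ are used. Mean-square continuity is then obtained directly from the isometry together with the strong continuity of $\TT$, and predictability from adaptedness plus mean-square continuity (this last step you do have correctly).
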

\begin{proof}
Let us recall that the (unbounded) matrix operator $\AA$ on $\HH$ is
defined by
\[
\AA=\begin{pmatrix}\partial_x^2&0\\-\partial_\nu&B\end{pmatrix}
\]
with domain $D({\mathcal A}) = \{X = (\bar{u},p) \in\HH \,:\,
\bar{u} \in D(A), u_l(\mv_i) = p_i \ \text{for every $l \in
  \Gamma(\mv_i)$}\}$, and, by proposition \ref{generation}, it
generates a $C_0$-analytic semigroup of contractions on $\HH$.

Let us introduce the interpolation spaces
$\HH_\theta=(\HH,D(\AA))_{\theta,2}$ for $\theta\in(0,1)$.  By
classical interpolation theory (see e.g. \cite{Lu}) it results that,
for $\theta<1/4$, $\HH_\theta=\mathbb{H}^{2\theta}\times\R^n$ while
for $\theta>1/4$ the definition of $\HH_\theta$ involves boundary
conditions, that is
$$
\HH_\theta=\left\{\begin{pmatrix}\bar{u}\\p\end{pmatrix}\in H^{2\theta}\times\R^n:\Pi\bar{u}=p\right\}.
$$
Therefore, one has $(0,\sigma x)\in\HH_\theta$ for
$\theta<1/4$. Furthermore, for $\theta>1/2$, one also has
$\HH_\theta\subset \mathbb{H}^1\times\R^n\subset (C[0,1])^m\times\R^n$ by
Sobolev embedding theorem. Moreover, for all $x\in\HH_\theta$ and
$\theta+\gamma\in(0,1)$, it holds
$$
|\TT(t)x|_{\theta+\gamma}\leq t^{-\gamma}|x|_\theta e^{\omega_{\AA} t},
$$  
where $\omega_{\AA}$ is the spectral bound of the operator $\AA$.  

Let $\theta,\gamma$ be real numbers such that $\theta\in(0,1/4)$,
$\gamma\in(0,1/2)$ and $\theta+\gamma\in(1/2,1)$. Then for all
$t\in[0,T]$
\[
|Z(t)|_{\theta+\gamma}\leq
\int_0^t\int_{\R^n}\left|\TT(t-s)
\begin{pmatrix}0\\\sigma x\end{pmatrix}
\right|_{\theta+\gamma}\tilde{N}(\d x,\d s)
\qquad \mathbb{P}\text{-a.s.}
\]
The right hand side of the above inequality is well defined if and
only if
\[
\E\left|\int_0^T\int_{\R^n}\left|\TT(s)\begin{pmatrix}0\\\sigma x\end{pmatrix}\right|_{\theta+\gamma}\tilde{N}(\d x,\d s)\right|^2
=
\int_0^T\int_{\R^n}\left|\TT(s)\begin{pmatrix}0\\\sigma x\end{pmatrix}\right|^2_{\theta+\gamma}\nu(\d x)\d s
<\infty,
\]
where the identity follows by the classical isometry for Poisson integrals.
On the other hand, one has
\begin{eqnarray*}
\int_0^T\int_{\R^n}\left|\TT(s)\begin{pmatrix}0\\\sigma x\end{pmatrix}\right|^2_{\theta+\gamma}\nu(\d x)\d s
&\leq&\int_0^T\int_{\R^n}s^{-2\gamma}\left|\begin{pmatrix}0\\\sigma x\end{pmatrix}\right|^2_{\theta}e^{2\omega_\AA s}\nu(\d x)\d s\\
&\leq&|\sigma|^2\int_0^Ts^{-2\gamma}e^{2\omega_\AA s}\d s\int_{\R^n}|x|^2\nu(\d x)<\infty
\end{eqnarray*}
using $\gamma\in(0,1/2)$ and assumption (\ref{cond1}). So
$Z(t)\in\HH_{\theta+\gamma}$ for $\theta+\gamma>1/2$ and then $Z(t)\in
(C[0,1])^m\times\R^n=\mathcal{E}$.  It remains to prove that $Z(t)$ is mean
square continuous as $\mathcal{E}$-valued process.  For $0 \le s < t \le T$ we
can write
\begin{eqnarray*}
\E|Z(t)-Z(s)|_\mathcal{E}^2&=& \E\left|\int_0^t \TT(t-r)\Sigma \,\d\LL(r) - \int_0^s\TT(s-r)\Sigma \d\LL(r)\right|_\mathcal{E}^2\\
&\leq& 2\E\left| \int_0^s\int_{\R^n} [\TT(t-r)-\TT(s-r)] \begin{pmatrix} 0 \\ \sigma x\end{pmatrix} \, \tilde{N}(\d  x, \d r)\right|_\mathcal{E}^2\\
&& + 2 \E\left|\int_s^t\int_{\R^n} \TT(t-r) \begin{pmatrix} 0 \\ \sigma x\end{pmatrix} \, \tilde{N}(\d x,\d r)\right|_\mathcal{E}^2\\
&=&2 \int_0^s\int_{\R^n}\left|[\TT(t-r)-\TT(s-r)] \begin{pmatrix} 0 \\ \sigma x\end{pmatrix}\right|_\mathcal{E}^2 \, \nu(\d x) \d r\\
&&+2\int_s^t\int_{\R^n}\left|\TT(t-r) \begin{pmatrix} 0 \\ \sigma x\end{pmatrix}\right|_\mathcal{E}^2 \, \nu(\d x)\d r\longrightarrow 0
\end{eqnarray*}
by the strong continuity of the semigroup $\TT(t)$.  Since the
stochastic convolution $Z(t)$ is adapted and mean square continuous,
it is predictable.
\end{proof}

\subsection{Existence and uniqueness in the Lipschitz case}
We consider as a preliminary step the case of Lipschitz continuous
nonlinear term and we prove existence and uniqueness of solutions in
the space $C_\mathcal{F}$ of adapted mean square continuous processes
taking values in $\mathcal{H}$. We would like to mention that this
result is included only for the sake of completeness and for the
simplicity of its proof (which is essentially based only on the isometry
defining the stochastic integral). In fact, a much more general
existence and uniqueness result was proved by Kotelenez in
\cite{Kote-Doob}.

\begin{theorem}     \label{th:caso-lip}
  Assume that Hypothesis \ref{hp2} holds, and let $x_0$ be an
  ${\mathcal F}_0$-measurable $\HH$-valued random variable such that
  $\E|x_0|^2<\infty$.
  Let $G:\HH\to\HH$ be a function satisfying Lipschitz and linear
  growth conditions:
  \begin{equation}
    \label{cond:lip}
    |G(x)|\leq c_0(1+|x|),\qquad|G(x) - G(y)| \le c_0 |x-y|,\qquad x,y \in \HH.
  \end{equation}
  for some constant $c_0>0$. Then there exists a unique mild solution
  $X\in C^0([0,T];L^2(\Omega,\HH))$ to equation (\ref{eq:NL}) with
  $-\FF$ replaced by $G$. Moreover, the solution map $x_0 \mapsto
  X(t)$ is Lipschitz continuous.
\end{theorem}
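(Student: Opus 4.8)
The plan is to use a standard Banach fixed point argument in the space $C^0([0,T];L^2(\Omega,\HH))$, equipped either with its natural sup-in-time-$L^2$-in-$\omega$ norm or, if needed to obtain a contraction on the whole interval $[0,T]$ in one shot, with an equivalent weighted norm $\|X\|_\beta = \sup_{t\in[0,T]} e^{-\beta t}(\E|X(t)|_\HH^2)^{1/2}$ for $\beta$ large. Define the map $\Lambda$ on this space by
\begin{equation*}
  (\Lambda X)(t) = \TT(t)x_0 + \int_0^t \TT(t-s) G(X(s))\,\d s + \int_0^t \TT(t-s)\Sigma\,\d\LL(s).
\end{equation*}
A mild solution is exactly a fixed point of $\Lambda$, so it suffices to show $\Lambda$ maps the space into itself and is a contraction (in the weighted norm).

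First I would check that $\Lambda$ is well-defined and maps $C^0([0,T];L^2(\Omega,\HH))$ into itself. The term $\TT(t)x_0$ is continuous in $t$ with values in $L^2(\Omega,\HH)$ since $\TT$ is a $C_0$-semigroup and $\E|x_0|^2<\infty$. The stochastic convolution term lies in $C_\FF([0,T];L^2(\Omega;\HH))$: this is essentially Lemma \ref{lemma1} (which even gives values in the smaller space $\mathcal{E}$), relying on Hypothesis \ref{hp2} and the contraction property $|\TT(t)|_{L(\HH)}\le 1$. For the deterministic convolution, the linear growth bound in \eqref{cond:lip} together with $|\TT(t)|_{L(\HH)}\le 1$ gives $\E|\int_0^t\TT(t-s)G(X(s))\,\d s|^2 \le t\int_0^t \E|G(X(s))|^2\,\d s \le 2c_0^2 t\int_0^t(1+\E|X(s)|^2)\,\d s < \infty$, and continuity in $t$ follows by strong continuity of $\TT$ and dominated convergence. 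Predictability of $\Lambda X$ is inherited from that of $X$ and of the stochastic convolution.

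Next, for the contraction estimate, note that only the drift term depends on $X$, so for two processes $X,Y$,
\begin{equation*}
  \E|(\Lambda X)(t) - (\Lambda Y)(t)|_\HH^2 = \E\Bigl|\int_0^t \TT(t-s)\bigl(G(X(s))-G(Y(s))\bigr)\,\d s\Bigr|_\HH^2 \le c_0^2\, t \int_0^t \E|X(s)-Y(s)|_\HH^2\,\d s,
\end{equation*}
using Cauchy--Schwarz in the time integral, $|\TT(t-s)|_{L(\HH)}\le 1$, and the Lipschitz bound. Multiplying by $e^{-2\beta t}$ and taking the supremum yields $\|\Lambda X - \Lambda Y\|_\beta^2 \le c_0^2 T \bigl(\sup_{t}\int_0^t e^{-2\beta(t-s)}\,\d s\bigr)\|X-Y\|_\beta^2 \le \tfrac{c_0^2 T}{2\beta}\|X-Y\|_\beta^2$, which is a strict contraction for $\beta > c_0^2 T/2$. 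Hence $\Lambda$ has a unique fixed point $X$, and condition \eqref{cond:mild} holds automatically since $\int_0^T \E|G(X(s))|\,\d s<\infty$ by linear growth and square-integrability (one may argue pathwise finiteness from this). Finally, Lipschitz dependence on the initial datum follows by applying the same estimate to the difference of two solutions with data $x_0, x_0'$: the extra term $\TT(t)(x_0-x_0')$ contributes $e^{-2\beta t}\E|x_0-x_0'|^2 \le \E|x_0-x_0'|^2$, and Gronwall (or absorbing via the weighted norm) gives $\sup_t \E|X(t)-X'(t)|^2 \le C_T\,\E|x_0-x_0'|^2$.

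I do not expect any serious obstacle here — the result is, as the authors note, essentially classical — but the one point requiring a little care is verifying that the stochastic convolution genuinely belongs to $C^0([0,T];L^2(\Omega,\HH))$ with the right integrability and path regularity; this is precisely what Lemma \ref{lemma1} supplies, so the argument is clean once that lemma is invoked. The only mild subtlety is choosing the weighted norm (or iterating on short subintervals) to turn the Lipschitz estimate into a genuine contraction uniformly on $[0,T]$.
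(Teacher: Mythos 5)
Your proposal is correct and follows essentially the same route as the paper: the same fixed-point map (initial term plus deterministic convolution plus stochastic convolution, with Lemma \ref{lemma1} supplying the needed properties of the latter), the same self-mapping and Lipschitz estimates, and Banach's fixed point theorem. The only cosmetic difference is that you obtain the contraction via an exponentially weighted norm on all of $[0,T]$, whereas the paper contracts on a short interval $[0,\tilde T]$ with $\tilde T<c_0^{-1}$ and then patches solutions together; these are standard equivalent devices.
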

\begin{proof}
  We follow the semigroup approach of \cite[Theorem 7.4]{DZ92} where
  the case of Wiener noise is treated. We emphasize only the main
  differences in the proof.
  
  The uniqueness of solutions reduces to a simple application of
  Gronwall's inequality.  To prove existence we use the classical
  Banach's fixed point theorem in the space $C_{\mathcal
    F}([0,T];L^2(\Omega;{\mathcal H}))$.  Let $\KK$ be the mapping
  \begin{equation*}
    \KK(Y)(t) = \TT(t)x_0 + \int_0^t\TT(t-s)G(Y(s))\, {\rm d} s +
    Z(t)
  \end{equation*}
  where $Y\in C_{\mathcal F}([0,T];L^2(\Omega;{\mathcal H}))$ and
  $Z(t)$ is the stochastic convolution.  $Z(\cdot)$ and
  $\TT(\cdot)x_0$ belong to $C_{\mathcal F}([0,T];L^2(\Omega;{\mathcal
    H}))$ respectively in view of Lemma \ref{lemma1} and the assumption on $x_0$. Moreover, setting
  \begin{equation*}
    \KK_1(Y)(t) = \int_0^t\TT(t-s)G(Y(s))\, {\rm d} s,
  \end{equation*}
  it is sufficient to note that
  \begin{equation*}
    |\KK_1(Y)|_{C_\mathcal{F}}^2\leq (Tc_0)^2(1+|Y|^2_{C_\mathcal{F}})
  \end{equation*}
  by the linear growth of $G$ and the contractivity of $\TT(t)$.
Then we obtain that $\KK$ maps the space $C_{\mathcal F}([0,T];L^2(\Omega;{\mathcal H}))$ to itself.
  Furthermore, using the Lipschitz continuity of $G$, it follows that for
  arbitrary processes $Y_1$ and $Y_2$ in  $C_{\mathcal
    F}([0,T];L^2(\Omega;{\mathcal H}))$ we have
  \begin{equation*}
    |\KK(Y_1) - \KK(Y_2)|_{C_\mathcal{F}}^2 = |\KK_1(Y_1)-\KK_1(Y_2)|_{C_\mathcal{F}}^2 \leq
    (c_0T)^2|Y_1-Y_2|_{C_\mathcal{F}}^2.
  \end{equation*}
  If we choose an interval $[0,\tilde{T}]$ such that $\tilde{T} <
  c_0^{-1}$, it follows that the mapping $\KK$ has a unique fixed
  point $X \in C_{\mathcal F}([0,\tilde T];L^2(\Omega;{\mathcal H}))$.
  The extension to an arbitrary interval $[0,T]$ follows by patching
  together the solutions in successive time intervals of length
  $\tilde{T}$.

  The Lipschitz continuity of the solution map $x_0 \mapsto X$ is
  again a consequence of Banach's fixed point theorem, and the proof
  is exactly as in the case of Wiener noise.

  It remains to prove the mean square continuity of $X$. Observe that
  $\TT(\cdot)x_0$ is a deterministic continuous function and it
  follows, again from Lemma \ref{lemma1}, that the stochastic
  convolution $Z(t)$ is mean square continuous. Hence it is sufficient
  to note that the same holds for the term
  $\int_0^t\TT(t-s)G(X(s))\,{\rm d} s$, that is $\mathbb{P}$-a.s. a
  continuous Bochner integral and then continuous as the composition
  of continuous functions on $[0,T]$.
\end{proof}

\begin{remark}
  By standard stopping time arguments one can actually show existence
  and uniqueness of a mild solution assuming only that $x_0$ is
  $\mathcal{F}_0$-measurable.
\end{remark}

In order to prove that the solution constructed above has c\`adl\`ag
paths, unfortunately one cannot adapt the factorization technique
developed for Wiener integrals (see e.g. \cite{DZ92}). However, the
c\`adl\`ag property of the solution was proved by Kotelenez
\cite{Kote-Doob}, under the assumption that $\AA$ is dissipative.
Therefore, thanks to proposition \ref{generation}, the solution
constructed above has c\`adl\`ag paths. One could also obtain this
property proving the following a priori estimate, which might be
interesting in its own right.

\begin{theorem}     \label{thm:esup}
  Under the assumptions of theorem \ref{th:caso-lip} the unique mild
  solution of problem \eqref{eq:NL} verifies
  \[
  \E \sup_{t\in[0,T]} |X(t)|_\HH^2 < \infty.
  \]
\end{theorem}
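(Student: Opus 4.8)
The plan is to bound $\E\sup_{t\le T}|\cdot|_\HH^2$ of each of the three terms in the mild representation \eqref{eq:mild} of the solution (with $-\FF$ replaced by $G$), namely $X(t)=\TT(t)x_0+Y(t)+Z(t)$, where $Y(t)=\int_0^t\TT(t-s)G(X(s))\,\d s$ and $Z$ is the stochastic convolution. The first term is immediate: contractivity of $\TT$ gives $\sup_{t\le T}|\TT(t)x_0|_\HH\le|x_0|_\HH$, which is square-integrable by assumption. For $Y$, contractivity again yields $\sup_{t\le T}|Y(t)|_\HH\le\int_0^T|G(X(s))|_\HH\,\d s$, so that the Cauchy--Schwarz inequality in time, the linear growth bound in \eqref{cond:lip}, and the property $X\in C_\mathcal{F}([0,T];L^2(\Omega;\HH))$ from Theorem \ref{th:caso-lip} (hence $\sup_{s\le T}\E|X(s)|_\HH^2<\infty$) give $\E\sup_{t\le T}|Y(t)|_\HH^2\le 2c_0^2T\int_0^T\bigl(1+\E|X(s)|_\HH^2\bigr)\,\d s<\infty$. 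The whole difficulty therefore lies in the maximal estimate $\E\sup_{t\le T}|Z(t)|_\HH^2<\infty$ for the jump stochastic convolution: Lemma \ref{lemma1} only provides the weaker $\sup_{t\le T}\E|Z(t)|_\HH^2<\infty$, and, as observed in the text, the factorization method available for Wiener noise cannot be used here.

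To control $Z$ I would exploit the dissipativity of $\AA$ (Proposition \ref{generation}) through a Yosida regularization. Set $\AA_n=n\AA(n-\AA)^{-1}$; these operators are bounded, dissipative, and generate contraction semigroups $\TT_n\to\TT$ strongly (uniformly on compact time intervals), and the associated convolutions $Z_n(t)=\int_0^t\TT_n(t-s)\Sigma\,\d\LL(s)$ are, since $\AA_n$ is bounded, the strong solutions of $\d Z_n=\AA_n Z_n\,\d t+\Sigma\,\d\LL$, i.e. $Z_n(t)=\AA_n\int_0^t Z_n(s)\,\d s+\Sigma\LL(t)$. By the classical isometry for Poisson integrals and the contractivity of $\TT_n$ one has the uniform bound $\E|Z_n(t)|_\HH^2=\int_0^t\!\int_{\R^n}|\TT_n(s)(0,\sigma x)|_\HH^2\,\nu(\d x)\,\d s\le t\,c_\nu$, where $c_\nu:=\int_{\R^n}|\sigma x|^2\,\nu(\d x)<\infty$ by \eqref{cond1}.

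The core step is Itô's formula applied to $|Z_n(t)|_\HH^2$. The absolutely continuous part contributes $2\int_0^t\langle\AA_n Z_n(s),Z_n(s)\rangle_\HH\,\d s\le 0$ by dissipativity of $\AA_n$, and so drops from the upper estimate; the jump part yields the (real-valued) martingale $\int_0^t\!\int_{\R^n}\bigl(2\langle Z_n(s^-),(0,\sigma x)\rangle_\HH+|\sigma x|^2\bigr)\tilde N(\d s,\d x)$ plus the deterministic compensator $t\,c_\nu$. I would split this martingale as $N_n+P$, where $N_n$ has integrand $2\langle Z_n(s^-),(0,\sigma x)\rangle_\HH$ and $P$ has the deterministic integrand $|\sigma x|^2$. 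Doob's $L^2$-inequality, the Poisson isometry, and the bound $\E|Z_n(s)|_\HH^2\le s\,c_\nu$ give $\E\sup_{t\le T}|N_n(t)|\le C\,T c_\nu$; and since $P(t)=\sum_{s\le t}|\sigma\Delta L(s)|^2-t\,c_\nu$ is, up to its compensator, an increasing process, $\sup_{t\le T}|P(t)|$ is dominated pathwise by $\sum_{s\le T}|\sigma\Delta L(s)|^2+T c_\nu$, whose expectation equals $2T c_\nu<\infty$ — this is exactly the point at which only the second-order moment of $\nu$ (Hypothesis \ref{hp2}) is used, so that no fourth-moment assumption on the L\'evy measure enters. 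Collecting these estimates, $\E\sup_{t\le T}|Z_n(t)|_\HH^2\le C\,T c_\nu$ with $C$ independent of $n$.

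It remains to pass to the limit. For each $t$, $\E|Z_n(t)-Z(t)|_\HH^2=\int_0^t\!\int_{\R^n}|(\TT_n(s)-\TT(s))(0,\sigma x)|_\HH^2\,\nu(\d x)\,\d s\to 0$ by the strong convergence $\TT_n\to\TT$ and dominated convergence (the integrand is bounded by $4|(0,\sigma x)|_\HH^2$, which is $\nu\otimes\d s$-integrable). Hence along a subsequence $Z_{n_k}(t)\to Z(t)$ almost surely, simultaneously for all $t$ in a fixed countable dense set $D\subset[0,T]$ with $T\in D$; since $Z$ and the $Z_{n_k}$ are c\`adl\`ag, $\sup_{t\in[0,T]}|Z(t)|_\HH=\sup_{t\in D}|Z(t)|_\HH$, so Fatou's lemma gives $\E\sup_{t\le T}|Z(t)|_\HH^2\le\liminf_k\E\sup_{t\le T}|Z_{n_k}(t)|_\HH^2\le C\,T c_\nu<\infty$. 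Adding the three contributions yields the assertion. The main obstacle is precisely this maximal bound for $Z$; its two essential ingredients are using the dissipativity of $\AA$ (in place of factorization) to discard the drift term in the Itô estimate, and recognizing the quadratic jump term as an increasing process, which is what lets the estimate close under the mere second-moment hypothesis on $\nu$ — the Yosida regularization serving only to make the application of Itô's formula legitimate.
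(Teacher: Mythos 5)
Your proof is correct, but it follows a genuinely different route from the paper's. The paper applies It\^o's formula directly to $|X(t)|_\HH^2$ for the full solution (justified by an approximation argument that is left implicit), uses the dissipativity of $\AA$ and the Lipschitz bound on $G$ to reduce to a Gronwall-type inequality for $\E\sup_{t\le T}|X(t)|^2$, controls the martingale term $\int_0^t\langle X(s-),\Sigma\,\d\LL(s)\rangle_\HH$ by the Burkholder--Davis--Gundy inequality together with Young's inequality, and then absorbs the resulting $\E\sup_{t\le T}|X(t)|^2$ terms by first taking $T$ and $\varepsilon$ small enough that $1-2c_0T-2c_1\varepsilon>0$ and then patching intervals. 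You instead split the mild solution into $\TT(\cdot)x_0+Y+Z$, dispose of the first two terms by contractivity and the already known bound $\sup_t\E|X(t)|^2<\infty$, and isolate the real difficulty in a maximal inequality for the stochastic convolution alone, proved via Yosida regularization of $\AA$, It\^o's formula for $|Z_n|^2$, Doob's $L^2$ inequality for the cross term, and the observation that the quadratic jump term is (up to its compensator) increasing --- the same device, incidentally, that the paper uses for $\E\sup_t[X](t)$. Your version buys a nonlinearity-free maximal estimate for $Z$ of independent interest, avoids BDG and the small-time absorption/patching step, and makes explicit the regularization that the paper only alludes to; the paper's version is shorter and treats the solution in one stroke. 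Two small points to tighten: your final Fatou argument uses that $Z$ has c\`adl\`ag paths (needed even to make $\sup_{t\in[0,T]}|Z(t)|$ measurable), which here must be imported from Kotelenez's result as the paper does --- slightly at odds with the paper's remark that this theorem could serve as an \emph{alternative} route to the c\`adl\`ag property (a self-contained variant would extract uniform convergence of $Z_{n_k}$ rather than pointwise convergence on a dense set); and you should record that $\TT_n(s)y\to\TT(s)y$ uniformly on $[0,T]$ for each $y$, which is the standard Hille--Yosida convergence and is what legitimizes the dominated convergence step.
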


\begin{proof}
  Let us consider the It\^o formula for the function $|\cdot|_\HH^2$,
  applied to the process $X$. Although our computations are only
  formal, they can be justified using a classical approximation
  argument. We obtain
  \begin{equation*}
    {\rm d}|X(t)|_\HH^2 = 2 \langle X(t-), {\rm d}X(t) \rangle_\HH + {\rm d}[X](t).
  \end{equation*}
  By the dissipativity of the operator $\AA$ and the Lipschitz continuity
  of $G$, we obtain
  \begin{align*}
    \langle X(t-), {\rm d}X(t) \rangle_\HH &=
    \langle\AA X(t),X(t)\rangle_\HH\d
    t+\langle G(X(t)),X(t)\rangle_\HH\d t+\langle X(t-),\Sigma\d
    \LL(t)\rangle_\HH
    \\
    &\leq c_0|X(t)|_\HH^2+\langle X(t-),\Sigma\d \LL(t)\rangle_\HH.
  \end{align*}
Therefore
\[
|X(t)|_\HH^2 \leq |x_0|_\HH^2 + 2c_0\int_0^t|X(s)|_\HH^2\d s+2\int_0^t\langle
X(s-),\Sigma\d\LL(s)\rangle_\HH+\int_0^t |\Sigma|^2\d [\LL](s)
\]
and
\begin{align}\label{fava}
\E\sup_{t\leq T} |X(t)|_\HH^2 \leq &
\E|x_0|_\HH^2 + 2c_0T \E\sup_{t\leq T} |X(t)|_\HH^2\nonumber\\
& + 2 \E\sup_{t\leq T} \Big| \int_0^t\langle
X(s-),\Sigma\d\LL(s)\rangle_\HH\Big|
+ T \int_{\R^n} |\Sigma|^2 |x|^2\,\nu(dx),
\end{align}
where we have used the relation
\[
\E \sup_{t\leq T} [X](t) \leq \E\int_0^T |\Sigma|^2 \,d[\LL](t) =
\E\int_0^T |\Sigma|^2\,d\langle\LL\rangle(t) =
    T \int_{\R^n} \left|\Sigma
      \begin{pmatrix}
        0 \\ x
      \end{pmatrix}
    \right|^2 \,\nu(dx).
\]
By the Burkholder-Davis-Gundy inequality applied to
$M_t=\int_0^t\langle X(s-),\Sigma\d\LL(s)\rangle_\HH$,
there exists a constant $c_1$ such that
\begin{eqnarray}\label{mazza}
\E \sup_{t\leq T} \Big| \int_0^t\langle X(s-),\Sigma\d\LL(s)\rangle_\HH\Big|
&\leq& c_1\E\left(\left[\int_0^\cdot \langle X(s-),\Sigma\d\LL(s)\rangle_\HH
\right](T)\right)^{1/2}\nonumber\\
&\leq&c_1\E\left(\sup_{t\leq T}|X(t)|_\HH^2\int_0^T|\Sigma|^2\d[\LL](s)\right)^{1/2}\nonumber\\
&\leq& c_1\left(\varepsilon\E\sup_{t\leq T}|X(t)|_\HH^2+\frac{1}{4\varepsilon}
\E\int_0^T|\Sigma|^2\d[\LL](s)\right)\nonumber\\
&=& c_1\varepsilon\E\sup_{t\leq T}|X(t)|_\HH^2
+ \frac{c_1T}{4\varepsilon} \int_{\R^n}|\Sigma|^2|x|^2\nu(\d x),
\end{eqnarray}
where we have used the elementary inequality $ab\leq\varepsilon a^2+\frac{1}{4\varepsilon}b^2$.
Then by (\ref{fava}) and (\ref{mazza}) we have
\begin{eqnarray*}
\E\sup_{t\leq T} |X(t)|_\HH^2 &\leq&
\E|x_0|_\HH^2 
+ 2c_0T \E\sup_{t\leq T} |X(t)|_\HH^2
+ 2c_1\varepsilon\E\sup_{t\leq T}|X(t)|_\HH^2\\
&& + \left( \frac{c_1}{2\varepsilon}+1\right)T \int_{\R^n}|\Sigma|^2|x|^2\,\nu(dx),
\end{eqnarray*}
hence
\[
\E\sup_{t\leq T}|X(t)|_\HH^2 \leq
N
\left[\E|x_0|_\HH^2+T\left(1+\frac{c_1}{2\varepsilon}\right)\right]
<+\infty,
\]
where
\[
N = N(c_0,c_1,T,\varepsilon) = 
\frac{1}{1-2c_0T-2c_1\varepsilon}.
\]
Choosing $\varepsilon>0$ and $T>0$ such that $N<1$, one obtains the claim for a small time interval. The extension to arbitrary time interval follows by classical extension arguments.
\end{proof}

\subsection{FitzHugh-Nagumo type nonlinearity}
Let us now consider the general case of a nonlinear quasi-dissipative
drift term $\mathcal{F}$.
\begin{theorem} \label{nonlip}
  Let $\mathcal{F}: D(\mathcal{F})\subset \mathcal{H} \to \mathcal{H}$
  be defined as in (\ref{eq:F_fitz-nag}). Then the equation
  \begin{equation}\label{FHN}
    \left\{
      \begin{aligned}
        \d X(t)& = [\AA X(t) - \mathcal{F}(X(t))] \, \d t+ \Sigma \, {\rm
          d}\LL(t), \qquad t\in [0,T],\\
        X(0)&= x_0
      \end{aligned}
    \right.
  \end{equation}
  admits a unique mild solution, denoted by $X(t,x_0)$, which
  satisfies the estimate
  \[
  \E|X(t,x)-X(t,y)|^2 \leq e^{2\eta t}\E|x-y|^2.
  \]
  for all $x$, $y\in\HH$.
\end{theorem}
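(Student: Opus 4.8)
The plan is to obtain the FitzHugh-Nagumo solution as a limit of solutions to equations with globally Lipschitz nonlinearities, exploiting the quasi-dissipativity encoded in Hypothesis \ref{hp1}.\ref{hp1:2}. First I would rewrite the equation in a way that isolates the good sign: since $f_j(u)+\eta u$ is continuous and decreasing, the map $\bar u\mapsto F(\bar u)+\eta\bar u$ is monotone (in the sense $\langle (F(\bar u)+\eta\bar u)-(F(\bar v)+\eta\bar v),\bar u-\bar v\rangle_{\mathbb H}\le 0$ after the appropriate sign bookkeeping), equivalently $\mathcal F+\eta$ is accretive on $\HH$. One then absorbs the linear term $-\eta X$ into the generator, writing $\AA_\eta:=\AA-\eta I$, which still generates an analytic semigroup (of type $e^{(\omega_\AA-\eta)t}$ or simply $e^{\eta' t}$ as needed), and studies $\d X=[\AA_\eta X-\mathcal G(X)]\,\d t+\Sigma\,\d\LL$ with $\mathcal G(X):=\mathcal F(X)+\eta X$ dissipative.

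Next I would introduce the Yosida-type approximations $\mathcal F_n$ (or $\mathcal G_n$), e.g. $\mathcal G_n := \mathcal G(J_n\,\cdot)$ with $J_n=(I+\tfrac1n\mathcal G)^{-1}$, or alternatively a direct truncation of the polynomials $f_j$ at level $n$; each $\mathcal F_n$ is globally Lipschitz with linear growth, so Theorem \ref{th:caso-lip} yields a unique mild solution $X_n\in C_\mathcal F([0,T];L^2(\Omega;\HH))$, with the a priori bound $\E\sup_{t\le T}|X_n(t)|_\HH^2<\infty$ from Theorem \ref{thm:esup}. The crucial step is a uniform estimate: applying the Itô formula for $|\cdot|_\HH^2$ to $X_n-X_m$ (as in the proof of Theorem \ref{thm:esup}, but now the stochastic and quadratic-variation terms cancel because the noise is additive and identical for all $n$), the dissipativity of $\mathcal G_n$ kills the nonlinear contribution up to the $e^{2\eta t}$ factor, giving
\[
\E|X_n(t)-X_m(t)|_\HH^2 \le 2\E\int_0^t\langle \AA_\eta(X_n-X_m),X_n-X_m\rangle\,\d s + (\text{lower-order mismatch terms}),
\]
and Gronwall then shows $(X_n)$ is Cauchy in $C_\mathcal F([0,T];L^2(\Omega;\HH))$; the limit $X$ is the candidate solution. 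One must separately check that $X$ satisfies the integrability condition \eqref{cond:mild} and that $\int_0^t\TT(t-s)\mathcal F_n(X_n(s))\,\d s\to\int_0^t\TT(t-s)\mathcal F(X(s))\,\d s$, so that $X$ is a genuine mild solution of \eqref{FHN}; uniqueness and the stated contraction estimate $\E|X(t,x)-X(t,y)|^2\le e^{2\eta t}\E|x-y|^2$ follow by the same Itô/dissipativity computation applied directly to the difference of two solutions with initial data $x$ and $y$.

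The main obstacle is the lack of a priori control in the higher $L^p$ (or $L^s$) spaces where $\mathcal F$ actually lives: to pass to the limit in the nonlinear term one needs that the approximating solutions $X_n$ stay bounded in a space on which $\mathcal F$ is continuous (e.g. $L^{2s}$ on each edge), which is not delivered by the $\HH$-energy estimate alone. The way around this is either (i) a bootstrap energy estimate testing the equation against $|u_j|^{s-1}u_j$-type functions to get $\E\int_0^T |u_j(s)|_{L^{2s}}^{2s}\,\d s<\infty$ uniformly in $n$, using the one-sided polynomial bound on $f_j$, or (ii) exploiting the smoothing of the analytic semigroup together with the space-regularity of the stochastic convolution from Lemma \ref{lemma1} (which already places $Z$ in $(C[0,1])^m\times\R^n$) to show $X_n$ is uniformly bounded in $C_\mathcal F([0,T];L^2(\Omega;\mathcal E))$, on which the polynomial $\mathcal F$ is Lipschitz on bounded sets. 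A localization/stopping-time argument on the $\mathcal E$-norm, combined with the dissipativity estimate, then closes the limit passage and yields both existence and the contraction bound; the uniqueness part is comparatively routine once one works pathwise on the event where both solutions have continuous $\mathcal E$-valued trajectories.
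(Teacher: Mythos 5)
Your overall skeleton (use the quasi-dissipativity of Hypothesis \ref{hp1}.\ref{hp1:2} to reduce to a dissipative nonlinearity, regularize by Yosida approximations, solve the Lipschitz problems via Theorem \ref{th:caso-lip}, obtain a priori bounds by It\^o's formula for $|\cdot|_\HH^2$, pass to the limit, and derive uniqueness plus the contraction estimate from the dissipativity computation applied to a difference of solutions) coincides with the paper's strategy. The last step, in particular, is exactly how the paper proves the bound $\E|X(t,x)-X(t,y)|^2\le e^{2\eta t}\E|x-y|^2$.

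The gap is in your ``crucial step.'' You claim that applying It\^o's formula to $X_n-X_m$ and invoking dissipativity shows $(X_n)$ is Cauchy in $C_\mathcal{F}([0,T];L^2(\Omega;\HH))$, with only ``lower-order mismatch terms'' left over. But the term you must control is
\[
-\,2\,\E\int_0^t\big\langle \mathcal G_n(X_n(s))-\mathcal G_m(X_m(s)),\,X_n(s)-X_m(s)\big\rangle\,\d s ,
\]
and since $\mathcal G_n$ and $\mathcal G_m$ are \emph{different} operators, monotonicity does not make this nonpositive. The standard decomposition $X_n=J_nX_n+\tfrac1n\mathcal G_n(X_n)$ reduces it to a genuinely monotone term plus remainders of the form $(\tfrac1n+\tfrac1m)\,\E\int_0^T|\mathcal G_n(X_n)|^2\,\d s$, so closing the Cauchy estimate requires a bound on $\mathcal G_\lambda X_\lambda$ that is \emph{uniform in $\lambda$ in $L^2$}. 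For a polynomial nonlinearity of degree $s$ with merely square-integrable data, the energy identity only gives $X_\lambda$ bounded in $L^{p+1}$ and $\mathcal G_\lambda X_\lambda$ bounded in $L^{(p+1)/p}$ — not in $L^2$ — so the remainder terms are not lower-order and the strong-convergence argument does not close. This is precisely why the paper abandons strong convergence: it extracts weak and weak* limits ($X_\lambda\stackrel{\ast}{\wto}X$ in $L^\infty([0,T];\elle^2(\HH))$, $X_\lambda\wto X$ in $L^{p+1}$, $\mathcal G_\lambda X_\lambda\wto\eta$ in $L^{(p+1)/p}$) and identifies $\eta=\mathcal G(X)$ by the Minty--Browder monotonicity trick, i.e.\ by comparing the energy identities \eqref{eq:minchia} and \eqref{eq:tega} and using weak lower semicontinuity of the norm. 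That identification step, together with the verification that $\mathcal G=-\AA+\FF$ is \emph{maximal} monotone (done in the paper via the subdifferential representations $\AA=\partial\varphi$, $F=\partial I_g$ and the criterion $\varphi((I+\lambda\FF)^{-1}u)\le\varphi(u)$), is the substantive content of the proof and is absent from your proposal.

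Your fallback routes (i) and (ii) — bootstrapping $L^{2s}$ estimates, or working in $\mathcal E=(C[0,1])^m\times\R^n$ where $\mathcal F$ is locally Lipschitz — are recognized alternatives (essentially the ``subtract the stochastic convolution'' strategy), but the paper's closing remark explicitly notes that the integrability/c\`adl\`ag conditions needed to run that program for this L\'evy-driven network problem could not be verified. As written, neither route is carried far enough to substitute for the missing weak-convergence argument.
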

\begin{proof}
As observed in section \ref{sec:3} above, there exists $\eta>0$ such
that $F + \eta I$ is accretive. By a standard argument one can
reduce to the case of $\eta=0$ (see e.g. \cite{barbu}), which we
shall assume from now on, without loss of generality.
Let us set, for $\lambda>0$, $F_\lambda(u)=F((1+\lambda F)^{-1}(u))$
(Yosida regularization). $\FF_\lambda$ is then defined in the
obvious way.

Let $\mathcal{G}y = -\AA y + \FF(y)$. Then $\mathcal{G}$ is maximal monotone on $\HH$. In
fact, since $\AA$ is self-adjoint, setting
\[
\varphi(u) =
\begin{cases}
  |\AA^{1/2}u|^2, & u\in D(\AA^{1/2}) \\
  + \infty, & \text{otherwise},
\end{cases}
\]
one has $\AA=\partial\varphi$.
Let us also set $F=\partial g$, where $g:\R^m \to \R$ is a convex
function, the construction of which is straightforward. Well-known
results on convex integrals (see e.g. \cite[sec. 2.2]{barbu}) imply that
$F$ on $H$ is equivalently defined as $F=\partial I_g$, where
\[
I_g(u) =
\begin{cases}
  \displaystyle
  \int_{[0,1]^m} g(u(x))\,\d x, & \text{if \ } g(u) \in L^1([0,1]^m),\\
  + \infty, & \text{otherwise}.
\end{cases}
\]
Let us recall that
\[
\FF=\begin{pmatrix}-F\\0\end{pmatrix}.
\]
Since $D(\FF) \cap D(\AA)$ is not empty, $\mathcal{G}$ is maximal monotone if
$\varphi((I+\lambda\FF)^{-1}(u)) \leq \varphi(u)$ (see e.g. \cite[Thm.
9]{Bre-mm}), which is verified by a direct (but tedious) calculation
using the explicit form of $\AA$, since $(I+\lambda f_j)^{-1}$ is a
contraction on $\R$ for each $j=1,\ldots,m$.

Let us consider the regularized equation
\[
dX_\lambda(t) + \mathcal{G}_\lambda X_\lambda(t)\,dt = \Sigma\,d\LL(t).
\]
Appealing to It\^o's formula for the square of the norm one obtains
\[
|X_\lambda(t)|^2 + 2\int_0^t
\langle \mathcal{G}_\lambda X_\lambda(s),X_\lambda(s)\rangle\,ds
= |x_0|^2 + 2\int_0^t \langle X_\lambda(s-),\Sigma\,d\LL(s)\rangle
+ [X_\lambda](t)
\]
for all $t\in[0,T]$. Taking expectation on both sides yields
\begin{equation}     \label{eq:minchia}
\E|X_\lambda(t)|^2 + 2\E\int_0^t
\langle \mathcal{G}_\lambda X_\lambda(s),X_\lambda(s)\rangle\,ds
= |x_0|^2 + t \int_{\R^n} |\Sigma|^2\,|z|^2\,\nu(dz),
\end{equation}
where we have used the identity
\[
\E[X_\lambda](t) = \E\Big[ \int_0^\cdot \Sigma\,d\LL(s) \Big](t)
= t \int_{\R^n} |\Sigma|^2\,|z|^2\,\nu(dz).
\]
Since by (\ref{eq:minchia}) we have that $\{X_\lambda\}$ is a bounded
subset of $L^\infty([0,T],\elle^2(\HH))$, and $\elle^2(\HH)$ is
separable, Banach-Alaoglu's theorem implies that
\[
X_\lambda \stackrel{\ast}{\rightharpoonup} X
\qquad \text{in\ } L^\infty([0,T],\elle^2(\HH)),
\]
on a subsequence still denoted by $\lambda$.
Thanks to the assumptions on $f_j$, one can easily prove that $\langle
F(u),u\rangle\geq c|u|^{p+1}$ for some $c>0$ and $p\geq 1$, hence (\ref{eq:minchia})
also gives
\[
\E \int_0^T |X_\lambda(s)|_{p+1}^{p+1}\,ds < C,
\]
which implies that
\begin{equation}      \label{eq:cazzo}
X_\lambda \wto X \qquad \text{in\ }
L^{p+1}(\Omega\times[0,T]\times D,\mathbb{P} \times dt\times d\xi),
\end{equation}
where $D=[0,1]^m \times \R^n$.
Furthermore, (\ref{eq:minchia}) and (\ref{eq:cazzo}) also imply
\[
\mathcal{G}_\lambda X_\lambda \wto \eta
\qquad \text{in\ }
L^{\frac{p+1}{p}}(\Omega\times[0,T]\times D,\mathbb{P} \times dt\times d\xi).
\]
The above convergences immediately imply that $X$ and $\eta$ are
predictable, then in order to complete the proof of existence, we have to show that $\eta(\omega,t,\xi)=\mathcal{G}(X(\omega,t,\xi))$, $\mathbb{P}\times dt \times d\xi$-a.e..  
For this it is enough to show that
\[
\limsup_{\lambda \to 0} \E\int_0^T
\langle \mathcal{G}_\lambda X_\lambda(s),X_\lambda(s) \rangle\,ds \leq
\E\int_0^T
\langle \eta(s),X(s) \rangle\,ds.
\]
Using again It\^o's formula we get
\begin{equation}     \label{eq:tega}
\E|X(T)|^2 + 2\E\int_0^T
\langle \eta(s),X(s)\rangle\,ds
= |x_0|^2 + T \int_{\R^n} |\Sigma|^2\,|z|^2\,\nu(dz).
\end{equation}
However, (\ref{eq:cazzo}) implies that
\[
\liminf_{\lambda\to 0} \E|X_\lambda(T)|^2 \geq \E|X(T)|^2
\]
(see e.g. \cite[Prop. 3.5]{Bre-AF}), from which the claim follows
comparing (\ref{eq:minchia}) and (\ref{eq:tega}).

The Lipschitz dependence on the initial datum as well as (as a consequence)
uniqueness of the solution is proved by observing that $X(t,x)-X(t,y)$
satisfies $\mathbb{P}$-a.s. the deterministic equation
\[
\frac{d}{dt} (X(t,x)-X(t,y)) = \AA(X(t,x)-X(t,y)) - \FF(X(t,x))+\FF(X(t,y)),
\]
hence
\begin{eqnarray*}
\frac12 \frac{d}{dt} |X(t,x)-X(t,y)|^2 &=&
\big\langle \AA(X(t,x)-X(t,y)),X(t,x)-X(t,y)\big\rangle \\
&&- \big\langle \FF(X(t,x)-\FF(X(t,y)),X(t,x)-X(t,y)\big\rangle\\
&\leq& \eta|X(t,x)-X(t,y)|^2,
\end{eqnarray*}
where $X(\cdot,x)$ stands for the mild solution with initial datum $x$. By Gronwall's lemma we have
\[
\E|X(t,x)-X(t,y)|^2\leq e^{2\eta t}\E|x-y|^2,
\]
which concludes the proof of the theorem.
\end{proof}
\begin{remark}
  An alternative method to solve stochastic evolution equations with a
  dissipative nonlinear drift term is developed in \cite{DZ92,DZ96},
  for the case of Wiener noise, and in the recent book \cite{PZ-libro} for
  the case of L\'evy noise. This approach consists essentially in the
  reduction of the stochastic PDE to a deterministic PDE with random
  coefficients, by ``subtracting the stochastic convolution''. To
  carry out this plan one has to find a reflexive Banach space
  ${\mathcal V}$, continuously embedded in ${\mathcal H}$, which is
  large enough to contain the paths of the stochastic convolution, and
  at the same time not too large so that it is contained in the domain
  of the nonlinearity $\mathcal{F}$. In particular, in the case of
  equation (\ref{FHN}), theorem 10.14 in \cite{PZ-libro} yields existence
  and uniqueness of a mild solution provided, among other conditions, that
  \[
  \int_0^T |Z(t)|^{18}\,dt < \infty \qquad\mathbb{P}\text{-a.s.}
  \]
  The result could also be obtained applying theorem 10.15 of
  op.~cit., provided one can prove that $\mathcal{L}$ has c\`adl\`ag
  trajectories in the domain of a fractional power of a certain
  operator defined in terms of $\mathcal{A}$. In some specific cases,
  such condition is implied by suitable integrability conditions of
  the L\'evy measure. Unfortunately it seems to us rather difficult to
  verify such conditions, a task that we have not been able to
  accomplish. On the other hand, our approach, while perhaps less
  general, yields the well-posedness result under seemingly natural
  assumptions.
\end{remark}
\begin{remark}
  By arguments similar to those used in the proof of theorem
  \ref{thm:esup} one can also obtain that
  \[
  \E\sup_{t\leq T}|X_\lambda(t)|^2<C,
  \]
  i.e. that $\{X_\lambda\}$ is bounded in
  $\elle^2(L^\infty([0,T];\HH))$. By means of Banach-Alaoglu's
  theorem, one can only conclude that
  $X_\lambda\stackrel{\ast}{\rightharpoonup}X$ in
  $\elle^2(L^1([0,T];\HH))'$, which is larger than
  $\elle^2(L^\infty([0,T];\HH))$. In fact, from
  \cite[Thm. 8.20.3]{Edw-FA}, being $L^1([0,T];\HH)$ a separable
  Banach space, one can only prove that if $F$ is a continuous linear
  form on $\elle^2(L^1([0,T];\HH))$, then there exists a function $f$
  mapping $\Omega$ into $L^\infty([0,T];\HH)$ that is weakly
  measurable and such that
\[
F(g)=\E\langle f,g\rangle
\]
for each $g\in \elle^2(L^1([0,T];\HH))$.
\end{remark}

\subsection*{Acknowledgements}
The second named author is partly supported by the DFG through SFB
611 (Bonn) and by the EU through grant MOIF-CT-2006-040743.

\bibliographystyle{amsplain}
\bibliography{ref}

\def\polhk#1{\setbox0=\hbox{#1}{\ooalign{\hidewidth
  \lower1.5ex\hbox{`}\hidewidth\crcr\unhbox0}}}
\providecommand{\bysame}{\leavevmode\hbox to3em{\hrulefill}\thinspace}
\providecommand{\MR}{\relax\ifhmode\unskip\space\fi MR }
\providecommand{\MRhref}[2]{%
  \href{http://www.ams.org/mathscinet-getitem?mr=#1}{#2}
}
\providecommand{\href}[2]{#2}
\begin{thebibliography}{10}

\bibitem{AlbRud-LI}
S.~Albeverio and B.~R{\"u}diger, \emph{Stochastic integrals and the
  {L}\'evy-{I}to decomposition theorem on separable {B}anach spaces}, Stoch.
  Anal. Appl. \textbf{23} (2005), no.~2, 217--253. \MR{MR2130348 (2008e:60157)}

\bibitem{App2}
D.~Applebaum, \emph{L\'evy processes and stochastic calculus}, Cambridge
  University Press, Cambridge, 2004. \MR{MR2072890 (2005h:60003)}

\bibitem{App1}
\bysame, \emph{Martingale-valued measures, {O}rnstein-{U}hlenbeck processes
  with jumps and operator self-decomposability in {H}ilbert space}, S\'eminaire
  de Probabilit\'es XXXIX, Lecture Notes in Math., vol. 1874, Springer, Berlin,
  2006, pp.~171--196. \MR{MR2276896 (2008d:60062)}

\bibitem{barbu}
V.~Barbu, \emph{Analysis and control of nonlinear infinite-dimensional
  systems}, Academic Press Inc., Boston, MA, 1993. \MR{MR1195128 (93j:49002)}

\bibitem{BoZi}
S.~Bonaccorsi and G.~Ziglio, \emph{A semigroup approach to stochastic dynamical
  boundary value problems}, Systems, control, modeling and optimization, IFIP
  Int. Fed. Inf. Process., vol. 202, Springer, New York, 2006, pp.~55--65.

\bibitem{Bre-mm}
H.~Br{\'e}zis, \emph{Monotonicity methods in {H}ilbert spaces and some
  applications to nonlinear partial differential equations}, Contributions to
  nonlinear functional analysis (Proc. Sympos., Math. Res. Center, Univ.
  Wisconsin, Madison, Wis., 1971), Academic Press, New York, 1971,
  pp.~101--156. \MR{MR0394323 (52 \#15126)}

\bibitem{Bre-AF}
\bysame, \emph{Analyse fonctionnelle}, Masson, Paris, 1983. \MR{MR697382
  (85a:46001)}

\bibitem{cerrai-libro}
S.~Cerrai, \emph{Second order {PDE}'s in finite and infinite dimension},
  Lecture Notes in Mathematics, vol. 1762, Springer-Verlag, Berlin, 2001.
  \MR{2002j:35327}

\bibitem{choj}
A.~Chojnowska-Michalik, \emph{On processes of {O}rnstein-{U}hlenbeck type in
  {H}ilbert space}, Stochastics \textbf{21} (1987), no.~3, 251--286.

\bibitem{DP-K}
G.~Da~Prato, \emph{Kolmogorov equations for stochastic {PDE}s}, Birkh\"auser
  Verlag, Basel, 2004. \MR{MR2111320 (2005m:60002)}

\bibitem{DZ92}
G.~Da~Prato and J.~Zabczyk, \emph{Stochastic equations in infinite dimensions},
  Cambridge University Press, Cambridge, 1992. \MR{MR1207136 (95g:60073)}

\bibitem{DZ96}
\bysame, \emph{Ergodicity for infinite-dimensional systems}, Cambridge
  University Press, Cambridge, 1996. \MR{MR1417491 (97k:60165)}

\bibitem{Edw-FA}
R.~E. Edwards, \emph{Functional analysis. {T}heory and applications}, Holt,
  Rinehart and Winston, New York, 1965. \MR{MR0221256 (36 \#4308)}

\bibitem{GS-III}
I.~I. Gihman and A.~V. Skorohod, \emph{The theory of stochastic processes.
  {III}}, Springer-Verlag, Berlin, 1979.

\bibitem{Hau}
E.~Hausenblas, \emph{Existence, uniqueness and regularity of parabolic {SPDE}s
  driven by {P}oisson random measure}, Electron. J. Probab. \textbf{10} (2005),
  1496--1546 (electronic).

\bibitem{HoHu}
A.L. Hodgkin and A.F. Huxley, \emph{A quantitative description of membrane
  current and its application to conduction and excitation in nerve}, J.
  Physiol. \textbf{117} (1952), no.~2, 500--544.

\bibitem{KaWo}
G.~Kallianpur and R.~Wolpert, \emph{Infinite-dimensional stochastic
  differential equation models for spatially distributed neurons}, Appl. Math.
  Optim. \textbf{12} (1984), no.~2, 125--172.

\bibitem{KaXi}
G.~Kallianpur and J.~Xiong, \emph{Diffusion approximation of nuclear
  space-valued stochastic-differential equations driven by {P}oisson random
  measures}, Ann. Appl. Probab. \textbf{5} (1995), no.~2, 493--517.

\bibitem{KeeSne}
J.~Keener and J.~Sneyd, \emph{Mathematical physiology}, Springer, New York,
  1998.

\bibitem{Kote-Doob}
P.~Kotelenez, \emph{A stopped {D}oob inequality for stochastic convolution
  integrals and stochastic evolution equations}, Stochastic Anal. Appl.
  \textbf{2} (1984), no.~3, 245--265.

\bibitem{KrMuSi}
M.~Kramar~Fijav{\v{z}}, D.~Mugnolo, and E.~Sikolya, \emph{Variational and
  semigroup methods for waves and diffusion in networks}, Appl. Math. Optim.
  \textbf{55} (2007), no.~2, 219--240.

\bibitem{LiMa}
J.-L. Lions and E.~Magenes, \emph{Non-homogeneous boundary value problems and
  applications. {V}ol. {I}}, Springer-Verlag, New York, 1972.

\bibitem{Lu}
A.~Lunardi, \emph{Analytic semigroups and optimal regularity in parabolic
  problems}, Birkh\"auser Verlag, Basel, 1995. \MR{MR1329547 (96e:47039)}

\bibitem{MuRo}
D.~Mugnolo and S.~Romanelli, \emph{Dynamic and generalized {W}entzell node
  conditions for network equations}, Math. Methods Appl. Sci. \textbf{30}
  (2007), no.~6, 681--706.

\bibitem{ouhabaz}
E.~M. Ouhabaz, \emph{Analysis of heat equations on domains}, Princeton
  University Press, Princeton, NJ, 2005.

\bibitem{PZ-libro}
Sz. Peszat and J.~Zabczyk, \emph{Stochastic partial differential equations with
  {L}\'evy noise}, Cambridge University Press, Cambridge, 2007. \MR{MR2356959}

\bibitem{Giugi}
C.~Roc{\c{s}}oreanu, A.~Georgescu, and N.~Giurgi{\c{t}}eanu, \emph{The
  {F}itz{H}ugh-{N}agumo model}, Kluwer Academic Publishers, Dordrecht, 2000.

\bibitem{Walsh}
J.~B. Walsh, \emph{An introduction to stochastic partial differential
  equations}, \'Ecole d'\'et\'e de probabilit\'es de Saint-Flour, XIV---1984,
  Lecture Notes in Math., vol. 1180, Springer, Berlin, 1986, pp.~265--439.

\end{thebibliography}

\end{document}